\numberwithin{equation}{section}
\newtheorem{theorem}{Theorem}[section]
\newtheorem{lemma}{Lemma}[section]
\newtheorem{corollary}{Corollary}[section]
\newtheorem{remark}{Remark}[section]
\newcommand{\iv}{\operatorname{div}}
\newcommand{\Lbr}{\{\!\!\{}
\newcommand{\Rbr}{\}\!\!\}}
\newcommand{\Lbrack}{[\![}
\newcommand{\Rbrack}{]\!]}
\title{An interior penalty discontinuous Galerkin method with correct and minimal averages, jumps and penalties for the miscible displacement problem of nonnegative characteristic form, and SUPG-type error estimates under low regularity, dominating Darcy velocity}
\author{Zhijie Du\thanks{School of Mathematics and Statistics, Wuhan University of Technology, Wuhan 430070, China. E-mail:zjdu@whu.edu.cn} \and Huoyuan Duan\thanks{School of Mathematics and Statistics, Wuhan University, Wuhan 430072, China.
E-mail: hyduan.math@whu.edu.cn} \and Roger C. E. Tan\thanks{Department of Mathematics, National University of Singapore, Singapore 119076, Singapore. E-mail: mattance@nus.edu.sg} \and Yuanhong Wei\thanks{School of Mathematics and Statistics, Wuhan University, Wuhan 430072, China. E-mail:yuanhongwei@whu.edu.cn}}
\date{}
\begin{document}

\maketitle

\begin{abstract}
An interior penalty discontinuous Galerkin (DG) method is proposed for the steady-state linear partial differential equations of nonnegative characteristic form or vanishing diffusion coefficient, suitable for mixed second-order elliptic-parabolic and first-order hyperbolic equations. Due to the different natures of the elliptic, parabolic, and hyperbolic equations, the solution must have the discontinuity, and the DG method is preferable in the finite element discretization. In the new DG method, the averages, jumps and penalties are minimal, correctly and only imposed on the diffusion-diffusion element boundaries, in addition to the well-known upwind jumps associating with the advection velocity. For the advection-dominated problem, the penalties can be further reduced only being imposed on the diffusion-dominated subset of the diffusion-diffusion element boundaries. \textit{This is based on the novel, crucial technique, which we have developed for the purpose of the theoretical analysis,  about the multiple partitions of the set of the interelement boundaries into a number of subsets with respect to the diffusion and to the advection and on the consistency result we have proven.}  In the literature, to the best knowledge of the authors, all DG methods introduce unnecessary and unphysical averages, jumps and penalties in the presence of the vanishing diffusion coefficient and of the dominating advection velocity, e.g., the physical discontinuity of the solution is also artificially penalized. The new DG method is the first DG method and the first time that the continuity and discontinuity of the solution are correctly identified and justified of the general steady-state linear partial differential equations of nonnegative characteristic form.   

The new DG method and its analysis are applied to the miscible displacement problem of vanishing diffusion coefficient and of low regularity, dominating Darcy flow velocity (advection velocity) which lives in $H(\iv;\Omega)\cap \prod_{j=1}^J (H^r(D_j))^d$ for $r<1$ 
other than the usual assumption $(W^{1,\infty}(\Omega))^d$ or $(H^2(\Omega))^d$ or $(W^{2,4}(\Omega))^d$. We prove the SUPG-type error estimates $\mathcal{O}(h^{\ell+\frac{1}{2}})$ for any element polynomial of degree $\ell\ge 1$ on generally shaped and nonconforming meshes, where the convergence order is independent of the regularity $r$ or $s$ of the advection velocity. \textit{The novel, crucial technique for the error estimates is the introduction of a key parameter $\mathcal{D}_K(A,\mathbf{u},\gamma,h_K)$ about the diffusion $A$, the advection velocity $\mathbf{u}$, the reaction coefficient $\gamma$ and the element diameter $h_K$.} The SUPG-type error estimates obtained are new and the first time known under the low regularity of the advection velocity.   
\\

\noindent {\bf Keywords} interior penalty discontinuous Galerkin method, advection-diffusion-reaction equations of nonnegative characteristic form, miscible displacement problem, low regularity, Darcy velocity, SUPG-type error estimates.
\\

\noindent {\bf Mathematics Subject Classification(2000)} 65N30.
\end{abstract}

\maketitle 

\section{Introduction}
\label{sec:Introduction}

In this paper, we shall be concerned with the discontinuous Galerkin (DG) finite element method for the general steady-state linear partial differential equations of nonnegative characteristic form   
\begin{equation}\label{eq:second-order pde}
\iv(-A\nabla c+\mathbf{u}c)+\gamma c=g,
\end{equation}
where the symmetric matrix diffusion coefficient $A\in\mathbb{R}^{d\times d}$ may vanish somewhere in $\Omega\subset\mathbb{R}^d$, $d=2,3$, satisfying 
\begin{equation}\label{eq:vanish diffusion}
\xi^\top A(\mathbf{x})\xi\ge 0 \quad\forall \xi\in\mathbb{R}^d, \text{ a.e. } \mathbf{x}\in\bar{\Omega}.
\end{equation} 
Such vanishing diffusion coefficient renders the name partial differential equations of nonnegative characteristic form to \eqref{eq:second-order pde}, cf. \cite{HoustonSuli2001}, \cite{HoustonSchwabSuli2002}. The partial differential equations \eqref{eq:second-order pde} presents the different natures of second-order elliptic-parabolic problems and first-order hyperbolic problem in some subregions of $\Omega$, and consequently, there must have some discontinuity-interfaces on which the solution has physical intrinsic discontinuities. Because of \eqref{eq:vanish diffusion}, the advection velocity $\mathbf{u}$ and the reaction coefficient $\gamma$ must dominate over the diffusion coefficient in their magnitudes, in general. As is well-known, the advection-dominated problems have been interesting in the finite element methods.     

The discontinuous Galerkin method is preferable simultaneously discretizing the advection-dominated problem and the discontinuous solution, and there have been numerous literature available. It also allows essentially arbitrarily-shaped elements (e.g., polytopal elements) and arbitrarily nonconforming triangulations (cf. \cite{CangianiDongGeorgoulis2022}). The general properties of the DG methods are averages, jumps and penalties, where the artificial penalties are imposed on the jumps across the interelement boundaries and play a decisive role in the stability, which may read as follows: 
$
\sum_{\Lambda\in\mathcal{F}_h} \alpha_\Lambda h_\Lambda^{-1} \int_\Lambda \Lbrack c_h\Rbrack\Lbrack v_h\Rbrack
$. 
The DG methods generally penalize the jumps on all the interelement boundaries, as is well-known. There is no problem whenever \eqref{eq:second-order pde} is a diffusive elliptic equation satisfying 
\begin{equation}\label{eq:elliptic diffusion}
\xi^\top A(\mathbf{x})\xi>0 \quad\forall \xi\in\mathbb{R}^d, \text{ a.e. } \mathbf{x}\in\bar{\Omega}.
\end{equation}  
The penalization mainly accounts  for the discontinuity in the finite element space while the solution often has the regularity $H^1(\Omega)$. However, for the general vanishing diffusion coefficient \eqref{eq:vanish diffusion}, penalizing all jumps is not necessary; for the limit case $A\equiv 0$, i.e., the first-order hyperbolic problem, as is well-known, no artificial penalties are introduced. There are a few DG methods (cf. \cite{HoustonSchwabSuli2002}, \cite{ProftRiviere2009} and references therein)  
which dealt with the vanishing diffusion coefficient, but unnecessary jumps are penalized over the interelement boundaries and unphysical continuities are assumed over some subsets of the domain $\Omega$ where the solution is actually discontinuous. Such issue in the DG method was firstly noticed in \cite{HoustonSchwabSuli2002}, but was not further studied. As is well-known in the pure diffusion problem, roughly speaking, the role of the penalties is to restore the continuity of the continuous solution because of the discontinuity caused by the discontinuous finite element space. However, as pointed out in \cite{HoustonSchwabSuli2002}, it is quite unnatural to penalize those discontinuous jumps of the physical discontinuity of the solution whose discontinuity happens at the places where the solution changes its nature from the elliptic equation to parabolic equation and/or to hyperbolic equation. It turns out from our study that the penalizations need to be imposed only on the diffusion-diffusion interelement boundaries and even only on the diffusion-dominated subset of the diffusion-diffusion interelement boundaries. 

Our study, as a matter of act, is based on a novel, crucial technique we have developed. It is about the multiple partitions of the set of the interelement boundaries of $\mathcal{F}_h^0$ (the set of interior element boundaries) into two parts (see \eqref{eq:diffusion element boundary-0}): one part is the subset of the diffusion-diffusion interelement boundaries $\mathcal{F}_h^{\text{df-df},0}$, see \eqref{eq:diffusion element boundary}, for which \eqref{eq:elliptic diffusion} is satisfied respectively with $\xi:=\mathbf{n}_{\partial K_1}$ and with $\xi:=\mathbf{n}_{\partial K_2}$ ($\mathbf{n}$ is the unit normal to the interelement boundary $\Lambda=\partial K_1\cap \partial K_2\in \mathcal{F}_h^{\text{df-df},0}$) on both sides of $\Lambda$, while the rest is the subset of the advection interelement boundaries $\mathcal{F}_h^{\text{ad},0}$, see  \eqref{eq:advection element boundary}, for which \eqref{eq:elliptic diffusion} is satisfied on at most one side of the interelement boundary $\Lambda\in\mathcal{F}_h^0$. Then, we further partition the subset  $\mathcal{F}_h^{\text{ad},0}$ into two subsubsets: one subsubset is called the advection-diffusion set $\mathcal{F}_h^{\text{ad-df},0}$ (see \eqref{eq:ad-df set}), the other is called the advection-advection set $\mathcal{F}_h^{\text{ad-ad},0}$ (see \eqref{eq:ad-ad set}); for the latter there are no diffusions on both sides of the interelement boundaries. Afterwards, to correctly identify what parts of the advection-diffusion set $\mathcal{F}_h^{\text{ad-df},0}$ need to impose the continuity of the solution, we partition this set once more time into two parts: the inflow part $\mathcal{F}_h^{\text{ad-df},0,-}$ and the outflow part $\mathcal{F}_h^{\text{ad-df},0,+}$, see \eqref{eq:ad-df-inflow-outflow}.  These multiple partitions, laying the foundation of our DG method, are used for the theoretical analysis only.   

From the consistency result we have rigorously shown of our DG method, it is revealed that the continuity conditions of the solution across the interior element boundaries are correctly identified and justified according to the multiple partitions of the set of the interelement boundaries. Such identifications 
read as follows (see Theorem \ref{thm:consistency}): 
\begin{equation}\label{eq:jum continuity}
\begin{aligned}
& \Lbrack c\Rbrack|_{\mathcal{F}_h^{\text{df-df},0}}=0,
\\
&\mathbf{u}\cdot\mathbf{n}\Lbrack c\Rbrack|_{\mathcal{F}_h^{\text{ad-df},0,+}}=0,
\\
& \text{No continuities are imposed on } {\mathcal{F}_h^{\text{ad-df},0,-}},
\\
& \mathbf{u}\cdot\mathbf{n}\Lbrack c\Rbrack|_{\mathcal{F}_h^{\text{ad-ad},0}}=0.
\end{aligned}
\end{equation}  
The above facts correctly give the requirements on the continuity of the solution, which are new, unknown in the literature, although the first continuity is well-known in the pure diffusion problem (i.e., $\mathbf{u}\equiv 0,\gamma\equiv 0$, and \eqref{eq:elliptic diffusion} holds) and the last continuity is well--known in the pure advection-reaction problem (i.e., $A\equiv 0$).  From \eqref{eq:jum continuity}, the solution needs to be continuous only across the diffusion-diffusion element boundaries $\mathcal{F}_h^{\text{df-df},0}$, while elsewhere, the solution itself is allowed to be discontinuous. Therefore, the above continuities comply with the realistic problems.  An example in \cite[Example 4, page 2159]{HoustonSchwabSuli2002}, which seems not to be correctly dealt with by any known DG methods, well illustrates the correctness of \eqref{eq:jum continuity}, see Remark \ref{rem:example}. It is based on the above correct continuities that our DG method imposes the correct, minimal averages, jumps, and penalties: these averages and penalties only live on the diffusion-diffusion element boundaries and the jumps only live on the diffusion-diffusion element boundaries and on the advection-advection element boundaries (the latter are the classical upwind jumps along the normal direction of the advection velocity). We need only to penalize the jumps on the diffusion-diffusion element boundaries, corresponding to the first continuity condition of \eqref{eq:jum continuity}. 

Moreover, the penalties can be further reduced for the advection-dominated problems. This is based on a further partition of the set  $\mathcal{F}_h^{\text{df-df},0}$ of the diffusion-diffusion interelement boundaries into two parts: one part is 
$\mathcal{F}_h^{\text{\rm df-df-dfd},0}$ of the diffusion-dominated diffusion-diffusion interelement boundaries, see \eqref{eq:df-df-dfd set},  for which the diffusion dominates over the advection on the interelement boundary $\Lambda\in \mathcal{F}_h^{\text{df-df},0}$, while the rest is $\mathcal{F}_h^{\text{\rm df-df-add},0}$, see \eqref{eq:advection-dominated df-df-0},  for which the advection dominates over the diffusion on the interelement boundary $\Lambda\in \mathcal{F}_h^{\text{df-df},0}$. On the diffusion Dirichlet boundary condition portion of $\Gamma$, we do similar partitions, see \eqref{eq:df-dfd-D set} and \eqref{eq:advection-dominated df-df-1}. Then, the jumps need only to be penalized on $\mathcal{F}_h^{\text{\rm df-df-dfd},0}$ the diffusion-dominated subset of the diffusion-diffusion interelement boundaries, while no penalties are needed on $\mathcal{F}_h^{\text{\rm df-df-add},0}$ the advection-dominated subset of the diffusion-diffusion interelement boundaries.   

As far as we know, our DG method is new, the first DG method and the first time that correctly and minimally imposes the averages, jumps and penalties for the steady-state, linear partial differential equations of the nonnegative characteristic form. The novel and crucial technique is the multiple partitions of the interelement boundaries with respect to the diffusion and to the advection, see \eqref{eq:correct boundary subset}. The continuity requirements are rigorously identified through the multiple partitions of the interelement boundaries and are rigorously justified from the consistency result we have shown. We must note that the multiple partitions are mainly used for the theoretical analysis (see Remark \ref{rem:multiple partition}) and do not enter into the DG method.    

In addition to the dominating property over the diffusion coefficient, the advection velocity may have a low regularity. This is the case in the incompressible miscible displacement problem. This problem has been playing an important role in hydrology and petroleum reservoir simulations (cf. \cite{Feng1995}, \cite{ChenEwing1999}), which reads as follows: in a domain $\Omega\subset\mathbb{R}^d$ ($d=2,3$) and in a time interval $(0,T)$, for the source functions $f$ and $g$,  
\begin{equation}\label{eq:diffusion-conveciton-reaction}
\phi c_t-\operatorname{div}(\mathbb{D}(\mathbf{u}) \nabla c-\mathbf{u} c)=g \quad \text { in } \quad \Omega \times(0, T), 
\end{equation}
where $c$ is the solvent concentration, and the Darcy flow velocity $\mathbf{u}$ satisfies, with the fluid pressure $p$,  
\begin{equation}\label{eq:Darcy}
\begin{aligned}
& \mathbf{u}=-\mathbb{K}(c) \nabla p\quad\mbox{in $\Omega \times(0, T)$}, 
\\
&\operatorname{div}(\mathbf{u})=f \quad \mbox{in $\Omega \times(0, T)$}.
\end{aligned}
\end{equation}
In the above system, $\phi$ is the porosity, $\mathbb{K}\in\mathbb{R}^{d\times d}$ depends nonlinearly on $c$, which describes the permeability of the porous medium, and $\mathbb{D}(\mathbf{u})\in\mathbb{R}^{d\times d}$ is the diffusion-dispersion coefficient.  There are initial and interfacial conditions and boundary conditions which are needed to supplement the above system.  

In this paper, we are mainly concerned about the finite element discretization of the concentration equation \eqref{eq:diffusion-conveciton-reaction} in its linearized, steady-state, where the Darcy velocity $\mathbf{u}$ has a low regularity and dominates over the diffusion-dispersion coefficient and the diffusion-dispersion coefficient may vanish in some subregions in the sense of \eqref{eq:vanish diffusion}. In realistic world, on one hand, since $\mathbb{K}$ is discontinuous, anisotropic, inhomogeneous and $\Omega$ has a nontrivial topology, the Darcy velocity $\mathbf{u}$ is usually singular, belonging to 
\begin{equation}\label{eq:low regularity}
H(\operatorname{div};\Omega)\cap \mbox{$\prod_{j=1}^J (H^r(D_j))^d$}\quad\mbox{for a partition $\Omega=\cup_{j=1}^J D_j$ and for a real number $0<r<1$};
\end{equation}
on the other hand, the Darcy velocity $\mathbf{u}$ may dominate over the diffusion-dispersion coefficient by a large magnitude, and \eqref{eq:diffusion-conveciton-reaction} becomes a type of advection-dominated problem. In addition, the diffusion-dispersion coefficient may not be uniformly elliptic, i.e., it only holds \eqref{eq:vanish diffusion}. 

For an advection-dominated problem, as is well-known, it can be efficiently solved by the SUPG (Streamline Upwind Petrov Galerkin) method and various stabilized methods, cf. \cite{BrooksHughes1982}. The SUPG-type stability and the SUPG-type error estimates are two advantages of the stabilized finite element methods. The SUPG-type stability can be formulated in the seminorm in terms of the advection velocity (here is the Darcy velocity $\mathbf{u}$),  like the one as follows: 
$\sqrt{\sum_{K\in\mathcal{T}_h} \tau_K \|\mathbf{u}\cdot\nabla v_h\|_{L^2(K)}^2}$, 
where 
$\tau_K \approx h_K$, whenever the advection is dominating over the diffusion, is called the stabilizing parameter.  The SUPG-type error estimates is the error bound 
\begin{equation}\label{eq:SUPG error}
O(h^{\ell+\frac{1}{2}})
\end{equation}
in the case of the dominating advection, where $\ell$ is the order of approximation of the finite element space of the solution. In the literature of the DG methods, as far as we known, in order to obtain the SUPG-type stability and the SUPG-type error bound, it has been assumed that the advection velocity $\mathbf{u}$ 
\begin{equation}\label{eq:smooth convection}
\mathbf{u}\in (W^{\ell,\infty}(\Omega))^d,\quad \ell\ge 1. 
\end{equation}
Such assumption obviously is not practical in the above system  \eqref{eq:diffusion-conveciton-reaction}- \eqref{eq:Darcy}. 

For a low regularity $\mathbf{u}$, an appropriate finite element discretization is the $H(\operatorname{div};\Omega)$-conforming elements (e.g., Raviart-Thomas elements) with a favorable conservation property, cf. \cite{BartelsJensenMuller2009}, \cite{LiSun2013}, \cite{Wang2008}, \cite{SunKou2015}, \cite{SunWu2021}, \cite{GaoSun2020}, \cite{SunYuan2009}, \cite{WangSiSun2014}, \cite{Sun2021}. Then, the resultant advection velocity, which is used in a follow-up step in the discretization of \eqref{eq:diffusion-conveciton-reaction}, cannot satisfy the assumption \eqref{eq:smooth convection}, in general. 

However, the low regularity $\mathbf{u}$ in the sense of \eqref{eq:low regularity} is not dealt with in most of the DG methods, still assuming \eqref{eq:smooth convection} (cf. \cite{HoustonSchwabSuli2002}, \cite{AyusoMarini2009},  \cite{DiPietroDroniouErn2015}, \cite{BuffaHughesSangalli2006}, \cite{DiPietroErnGuermond2008}). For low regularity solutions, we were aware of only a few literature (\cite{BartelsJensenMuller2009}, \cite{LiRiviereWalkington2015}, \cite{GiraultLiRiviere2016}, \cite{DroniouEymardPrignetTalbot2019}, \cite{RiviereWalkington2011}, \cite{SunWu2021}). They studied a general convergence. Some important issues are still missing therein, though. It is not clear whether these methods hold the advantageous SUPG-type error estimates under the low regularity Darcy velocity in the sense of \eqref{eq:low regularity}. As a matter of fact, we have not been aware of any DG method which  obtained the SUPG-type error estimates for the low regularity advection velocity (the Darcy velocity).  What is more,  few literature is known for the vanishing diffusion-dispersion coefficient. We also mention the very recent studies from the classical mixed finite element method in \cite{GaoSun2020}, \cite{SunWu2021}, \cite{Sun2021}. Therein very interesting error estimates results are obtained under the assumption that the Darcy velocity (advection velocity) $\mathbf{u}$ belongs to $(H^2(\Omega))^d$ or $(W^{2,4}(\Omega))^d$. Generally speaking, to well approximate the advection-dominated problem, the DG method, more or less, must improve the stability in the spirit of the SUPG-type stability and must reach the SUPG-type error bound. But, all DG methods without an artificial stabilization like the SUPG-type stabilized finite element method cannot provide SUPG-type stability, unless \eqref{eq:smooth convection} holds or unless $\mathbf{u}$ is some lowest-order piecewise-polynomial on the same mesh as the concentration. For the miscible displacement problem, however, the last two conditions are actually difficult to be fulfilled, since $\mathbf{u}$ generally has the low regularity \eqref{eq:low regularity} and is solved in a mesh totally different than that of the concentration (because $\mathbf{u}$ is discretized with the $H(\iv;\Omega)$-conforming element). 

In this paper, we apply and analyze our new interior DG method for the general linear partial differential equations of nonnegative characteristic form to numerically solve the incompressible miscible displacement problem \eqref{eq:diffusion-conveciton-reaction}, of which a steady-state and linearized model \eqref{eq:second-order pde} is studied, in dealing with the vanishing diffusion in the sense of \eqref{eq:vanish diffusion} and with the low regularity advection velocity in the sense of \eqref{eq:low regularity}.  In order to obtain the optimal SUPG-type error estimates in the sense of \eqref{eq:SUPG error}, we augment the DG formulation with a residual-based SUPG-type stabilization so that a SUPG-type stability can hold for a varying, non-piecewise-polynomial, low regularity advection velocity of \eqref{eq:low regularity}. 
Such stabilization seems to be indispensable, as we have not been aware of any DG method itself could generally provide SUPG-type stability without similar stabilizations for a non-piecewise-polynomial, low regularity \eqref{eq:low regularity} 
of $\mathbf{u}$. The main contribution of the new DG method, as mentioned earlier, is that it correctly and minimally imposes the averages, jumps and penalties, through the correct identification of the physical continuity and discontinuity of the unknown solution $c$ according to the multiple partitions of the interelement boundaries as given by \eqref{eq:correct boundary subset} and through the rigorous justification of the consistency result in Theorem \ref{thm:consistency}. Such DG method and the multiple partitions of the interelement boundaries are not known in the literature, to the authors' knowledge. In addition, the main contribution is that for the exact solution $c$ with the regularity $c\in \prod_{K\in\mathcal{T}_h} W^{1+\hat{\ell},4}(K)$ with a real number $\hat{\ell}$: $0<\hat{\ell}\le\ell$, we derive the error bound $O(h^{\min(\ell,\hat{\ell})})$; when the advection velocity dominates over the diffusion, we obtain the SUPG-type error bound $O(h^{\min(\ell,\hat{\ell})+\frac{1}{2}})$ for the $\ell$th order of the approximation of the finite element space of the DG solution. In particular, when $\hat{\ell}=\ell$, the optimal SUPG-type error bound $O(h^{\ell+\frac{1}{2}})$ follows. As a fundamental technique in the error estimates, a key parameter $\mathcal{D}_K(A,\mathbf{u},\gamma,h_K)$ is defined by \eqref{Eq:Stabilizing parameter-1}. Such parameter allows us to deal with the low regularity 
\eqref{eq:low regularity} 
and the general regularity $c$ of $\prod_{K\in\mathcal{T}_h} W^{1+\hat{\ell},4}(K)$, see Corollary \ref{cor:supg error bound}, Remarks \ref{rem:DK-1} and \ref{rem:DK-2}. Concerning the incompressible miscible flow in porous media like \eqref{eq:diffusion-conveciton-reaction}-\eqref{eq:Darcy}, the realistic and most reasonable regularity for the concentration $c$ and the Darcy velocity $\mathbf{u}$ is $c\in \prod_{m=1}^L W^{1+\hat{\ell},4}(\Omega_m)$ for a partition $\Omega=\cup_{m=1}^L \Omega_m$, $\mathbf{u}\in \prod_{j=1}^J (H^r(D_j))^d$. Under the low regularity advection velocity, the obtained SUPG-type error estimates are new and the first time known in the literature. An interesting feature of the convergence order is that it is independent of the regularity $r$ of the advection velocity. 

The remainder of this paper is organized as follows. In section \ref{sec:Problem statement}, the advection-diffusion-reaction problem is reviewed, with some standard notations. In section \ref{sec:Finite element method}, the finite element method is designed, where the general assumptions on the mesh are formulated, and the crucial multiple partitions of the set of the interelement boundaries are given and the key consistency result is proven. In section \ref{sec:Error estimates}, the error estimates are derived. A conclusion is included.   
    
\section{The advection-diffusion-reaction problem}
\label{sec:Problem statement}

The steady-state, linearized incompressible miscible displacement problem in the concentration can be modelled as the form \eqref{eq:second-order pde}, which is from now on called the advection-diffusion-reaction problem, where the diffusion coefficient amounts to the diffusion-dispersion coefficient, the advection velocity $\mathbf{u}$ is the Darcy velocity and the reaction coefficient $\gamma$ may be the porosity.  

Let $\Omega$ be an open set of $\mathbb{R}^d$ of the $\mathbf{x}=(x_1,x_2,\cdots,x_d)^\top$ coordinates system, $d=2,3$, with Lipschitz continuous boundary $\Gamma$. Introduce the diffusion coefficient $A=(A_{ij})\in\mathbb{R}^{d\times d}$, which is the diffusion-dispersion coefficient in the miscible displacement equation, which is symmetric (i.e., the transpose $A^\top=A$) and satisfies  
\begin{equation}
\xi^\top A(\mathbf{x})\xi\ge 0\quad\forall \xi\in\mathbb{R}^d,\quad\mbox{a.e. $\mathbf{x}\in\bar{\Omega}$}. 
\end{equation}
This vanishing diffusion coefficient would change the nature of the partial differential equations in different subregions of $\Omega$. In addition, the vanishing diffusion coefficient $A$ introduces two portions of the boundary $\Gamma$: the diffusion boundary and the advection boundary. Denote by the diffusion (df) boundary 
\begin{equation}
\Gamma_\text{df}=\{\mathbf{x}\in\Gamma: \mathbf{n}(\mathbf{x})^\top A(\mathbf{x})\mathbf{n}(\mathbf{x})>0\},  
\end{equation}
where ${\bf n}$ denotes a generic outward unit normal vector to any bounded domain. Without loss of generality, we assume that $|\Gamma_\text{df}|>0$ in $\mathbb{R}^{d-1}$ and that $\Gamma_\text{df}=\bar{\Gamma}_{\text{df},D}\cup\bar{\Gamma}_{\text{df},N}$, where $\Gamma_{\text{df},D}\cap\Gamma_{\text{df},N}=\emptyset$,  $|\Gamma_{\text{df},D}|>0$, $|\Gamma_{\text{df},N}|>0$. The subset $\Gamma_{\text{df},D}$ is used for the Dirichlet (D) boundary condition, called the diffusion Dirichlet boundary, while the subset $\Gamma_{\text{df},N}$ is used for the Neumann (N) boundary condition, called the diffusion Neumann boundary. The rest boundary portion  $\Gamma_\text{ad}:=\Gamma\setminus\bar{\Gamma}_\text{df}$ is called the advection (ad) boundary. We assume that $|\Gamma_\text{ad}|>0$. $\Gamma_\text{ad}$ can be defined alternatively by 
\begin{equation}
\Gamma_\text{ad}=\{\mathbf{x}\in\Gamma: \mathbf{n}(\mathbf{x})^\top A(\mathbf{x})\mathbf{n}(\mathbf{x})=0\}. 
\end{equation}
As a consequence (\cite{HoustonSuli2001}), 
\begin{equation}
A(\mathbf{x})\mathbf{n}(\mathbf{x})=0\quad\forall\mathbf{x}\in\Gamma_\text{ad}. 
\end{equation}

Let $W^{s,p}(D)$ be the standard Sobolev space on an open set $D$ of the indices $(s,p)$, $s\in\mathbb{R}, 1\le p\le\infty$. It is equipped with norm $\|\cdot\|_{W^{s,p}(D)}$, while its seminorm is denoted by $|\cdot|_{W^{s,p}(D)}$ for $s\not=0$. For $p=2$ and $s\not=0$, $W^{s,p}(D)=:H^s(D)$, with norm $\|\cdot\|_{W^{s,p}(D)}=:\|\cdot\|_{H^s(D)}$ and seminorm $|\cdot|_{W^{s,p}(D)}=:|\cdot|_{H^s(D)}$; for $p=2,s=0$, $W^{s,p}(D)=:H^0(D)=:L^2(D)$ the standard $L^2$ space, whose inner product and norm are respectively denoted by $(\cdot,\cdot)_{L^2(D)}$ and $\|\cdot\|_{H^0(D)}=|\cdot|_{H^0(D)}=:\|\cdot\|_{L^2(D)}$. Introduce the Hilbert space  $H(\iv;D)=\{\mathbf{v}\in (L^2(D))^d: \iv\mathbf{v}\in L^2(D)\}$, equipped with norm $\|\cdot\|_{H(\iv;D)}=\sqrt{\|\cdot\|_{L^2(D)}^2+\|\iv\cdot\|_{L^2(D)}^2}$, where $\iv\mathbf{v}=\sum_{i=1}^d \partial v_i/\partial x_i$ is the divergence of a vector-valued function $\mathbf{v}=(v_1(\mathbf{x}),v_2(\mathbf{x}),\cdots,v_d(\mathbf{x}))^\top$. 
 
Let $\mathbf{u}$ be the Darcy velocity, which will be always called as the advection velocity from now on. For the advection velocity $\mathbf{u}$, we assume that
\begin{equation}\label{eq:advection Hdiv}
\mathbf{u}\in H(\operatorname{div};\Omega),
\end{equation}
\begin{equation}\label{eq:piecewise regularity}
\mathbf{u}\in \prod_{j=1}^J (H^r(D_j))^d\quad 0<r<1.
\end{equation}
The partition of $\Omega=\cup_{j=1}^J D_j$ only associates with $\mathbf{u}$ in the Darcy flow problem \eqref{eq:Darcy}. Regarding the advection-diffusion-reaction problem under consideration, we  
still assume such partition; in the finite element discretization of the latter problem, because $\mathbf{u}$ is a discontinuous coefficient and we require the meshes to respect this partition.  

The advection velocity introduces an inflow portion and an outflow portion of the advection boundary $\Gamma_\text{ad}$:   
\begin{equation}\label{eq:inflow Gamma-ad-inflow-outflow}
\begin{aligned}
& \Gamma_{\text{ad},-}=\{\mathbf{x} \in \Gamma_\text{ad}: \mathbf{u}(\mathbf{x})\cdot \mathbf{n}(\mathbf{x})< 0\}\quad \text {inflow},
\\
& \Gamma_{\text{ad},+}=\{\mathbf{x} \in \Gamma_\text{ad}: \mathbf{u}(\mathbf{x}) \cdot \mathbf{n}(\mathbf{x}) \ge 0\}\quad \text {outflow}. 
\end{aligned}
\end{equation}
Let
\begin{equation}\label{eq:Dirichlet GammaD-dfD-adinflow}
\Gamma_D:=\Gamma_{\text{df},D}\cup\Gamma_{\text{ad},-}.
\end{equation}
It is used for imposing the Dirichlet boundary condition. To describe the Neumann boundary condition, we further divide $\Gamma_{\text{df},N}$ into two parts: 
\begin{equation}\label{eq:Gamma-dfNinflow-dfNoutflow}
\begin{aligned}
&\Gamma_{\text{df},N,-}=\{\mathbf{x}\in\Gamma_{\text{df},N}: \mathbf{u}(\mathbf{x})\cdot\mathbf{n}(\mathbf{x})<0\},
\\
&\Gamma_{\text{df},N,+}=\{\mathbf{x}\in\Gamma_{\text{df},N}: \mathbf{u}(\mathbf{x})\cdot\mathbf{n}(\mathbf{x})\ge 0\}. 
\end{aligned}
\end{equation} 
With the above boundary portions, we have
\begin{equation}\label{eq:Gamma multiple partition}
\begin{aligned}
\Gamma=&\bar{\Gamma}_\text{df}\cup\bar{\Gamma}_\text{ad}
=\bar{\Gamma}_{\text{df},D}\cup\bar{\Gamma}_{\text{df},N}\cup\bar{\Gamma}_{\text{ad},-}\cup\bar{\Gamma}_{\text{ad},+}
\\
=&\bar{\Gamma}_{\text{df},D}\cup\bar{\Gamma}_{\text{df},N,-}\cup\bar{\Gamma}_{\text{df},N, +}\cup\bar{\Gamma}_{\text{ad},-}\cup\bar{\Gamma}_{\text{ad},+}
\\
=& \bar{\Gamma}_D\cup \bar{\Gamma}_{\text{df},N,-}\cup\bar{\Gamma}_{\text{df},N,+}\cup\bar{\Gamma}_{\text{ad},+}.
\end{aligned}
\end{equation}

Let $g,\gamma$ be given functions and $\kappa,\chi^{\pm}$ be essential Dirichlet and natural Neumann boundary data, respectively. Consider the advection-diffusion-reaction problem as follows: Find $c$ such that

\begin{equation}\label{Eq:BVP}
\iv(-A\nabla c+{\bf u}c)+\gamma c=g\quad\mbox{in $\Omega$},
\end{equation}

\begin{equation}\label{Eq:Boundary essential and natrual condition}
c|_{\Gamma_D}
=\kappa, \quad (-A\nabla c+\mathbf{u}c)\cdot\mathbf{n}|_{\Gamma_{\text{df},N,-}}=\chi^{-}, \quad -A\nabla c\cdot{\bf n}|_{\Gamma_{\text{df},N,+}}=\chi^{+}.
\end{equation}

From the fac that $\iv({\bf u}c)=c \iv{\bf u} +{\bf u}\cdot\nabla c=c \iv(\frac{\bf u}{2})+\frac{\bf u}{2}\cdot\nabla c+\frac{\iv({\bf u}c)}{2}$, we, instead of \eqref{Eq:BVP}, use the equation
\begin{equation}\label{Eq:Equivalent PDE}
-\iv(A\nabla c)+\frac{\bf u}{2}\cdot\nabla c+\frac{\iv({\bf u}c)}{2}+(\gamma+\iv(\frac{\bf u}{2}))c=g.
\end{equation}
Both are equivalent. We assume that the reaction coefficient $\gamma_0:=
\gamma+\frac{\iv\mathbf{u}}{2}\ge 0$ on $\Omega$ ($\gamma_0$ may vanish somewhere in $\Omega$).  Throughout this paper, we have assumed that the total physical flux 
\begin{equation}\label{eq:normal continuity}
-A\nabla c+\mathbf{u}c\in H(\iv;\Omega).
\end{equation}

\section{Finite element method}
\label{sec:Finite element method}

\subsection{Mesh}

Let ${\cal T}_h=\{K\}$ denote a mesh of $\Omega$, composed of elements (denoted by a generic notation $K$) of Lipschitz-continuous boundary $\partial K$, so that $\bar{\Omega}=\cup_{K\in\mathcal{T}_h}\bar{K}$, where $|K|>0$ in $\mathbb{R}^d$ and $|\partial K|>0$ in $\mathbb{R}^{d-1}$, and $h=\max_{K\in\mathcal{T}_h} h_K$, and $h_K$ is the diameter of the element $K\in{\cal T}_h$. Let $\mathcal{F}_h^0$ denote the set of all the interior intersections with positive measurements in $\mathbb{R}^{d-1}$ of two adjacent elements, i.e., for $\Lambda\in\mathcal{F}_h^0$, there are two elements such that $\Lambda=\partial K_1\cap\partial K_2$ and $|\Lambda|>0$ in $\mathbb{R}^{d-1}$ and $|\Lambda\cap\Gamma|=0$. Let $\Sigma_0=\{\mathbf{x}\in\bar{\Omega}: \mathbf{x}\in\Lambda \text{ for some } \Lambda\subset\mathcal{F}_h^0\}$. Let $\mathcal{F}_{\Gamma,h}$ denote the set of all the intersections with positive measurements in $\mathbb{R}^{d-1}$ of the element boundary $\partial K$ of every element $K$ of $\mathcal{T}_h$ with $\Gamma$. Put $\mathcal{F}_h=\mathcal{F}_h^0\cup\mathcal{F}_{\Gamma,h}$. Denote by $h_\Lambda$ the diameter of $\Lambda\in\mathcal{F}_h$. Trivially, $h_\Lambda\le h_K$, $|\Lambda|\le C h_\Lambda^{d-1}$, and $|K|\le C h_K^d$ for some constant $C$. Throughout this paper, $C$  denotes a generic constant. It may take different values at different occurrences,but it does not depend on $h,K,A,\mathbf{u},\gamma,c$ and the data $g,\kappa,\chi^\pm$.  

We make some assumptions under which the mesh is allowed to have essentially arbitrarily shaped elements and to be arbitrarily nonconforming.  

\textit{Assumption 1} (Shape-regularity) For every $K\in\mathcal{T}_h$, 
\begin{equation}\label{eq:shape-regularity}
\begin{aligned}
& |\partial K| h_K \le C |K|.
\end{aligned}
\end{equation}

\textit{Assumption 2} (Condition on the nonconformity of mesh) 
\begin{equation}\label{eq:nonconformity}
\begin{aligned}
& |\Lambda|^\frac{d-2}{d-1} h_{K}^2\le C |K|\quad\forall\Lambda\in\mathcal{F}_h, \Lambda\subseteq\partial K. 
\end{aligned}
\end{equation}  

\textit{Assumption 3} (Local trace inequality)  
\begin{equation}\label{eq:local trace inequality}
\begin{aligned}
\frac{|K|}{|S|} \|q\|_{L^2(S)}^2\le C (\|q\|_{L^2(K)}^2+h_K^{2t} 
|q|_{H^t(K)}^2)
\\
\forall S\subseteq\partial K,|S|>0,\quad \frac{1}{2}<t\le 1,\quad q\in H^t(K). 
\end{aligned}
\end{equation}
\begin{equation}\label{eq:local trace inequality-finitedimensionalspace}
\frac{|K|}{|S|} \|q\|_{L^2(S)}^2\le C \|q\|_{L^2(K)}^2
\quad \mbox{$q$ in a finite-dimensional space over $K\in\mathcal{T}_h$}. 
\end{equation}
\begin{equation}\label{eq:local trace ienquality wsp}
\begin{aligned}
&\frac{|K|}{|S|} \|q\|_{L^2(S)}^2\le C (\|q\|_{L^2(K)}^2+h_K^{2t} |K|^\frac{1}{2}|q|_{W^{t,4}(K)}^2)
\\
&\forall S\subseteq\partial K,|S|>0,\quad \frac{1}{4}<t\le 1,\quad q\in W^{t,4}(K). 
\end{aligned}
\end{equation}

\begin{equation}\label{eq:local trace L1}
\begin{aligned}
\frac{|K|^\frac{1}{2}}{|S|} \|q\|_{L^1(S)}\le C (\|q\|_{L^2(K)}+h_K^t |q|_{H^t(K)})
\\
\forall S\subseteq\partial K,|S|>0,\quad 0<t\le 1,\quad q\in H^t(K). 
\end{aligned}
\end{equation}

\textit{Assumption 4} (Local inverse inequality) For any function $q$ in a finite-dimensional space over $K\in\mathcal{T}_h$,  
\begin{equation}\label{eq:local inverse H1L2}
|q|_{H^1(K)}\le C 
h_K^{-1}  
\|q\|_{L^2(K)},
\end{equation}

\textit{Assumption 5} (Local interpolation property) For any $K\in\mathcal{T}_h$, let $P_\ell(K)$, $\ell\ge 0$, denote the space of polynomials over $K$ of total degree $\ell$, and let $\pi_K v\in P_\ell(K)$ denote an interpolation of $v$. Let $S\subset\partial K$ be any portion with $|S|>0$ in $\mathbb{R}^{d-1}$. We assume that $\pi_Kv$ satisfies the following interpolation error estimates: 

\begin{equation}\label{Eq:interpolation error-0}
\begin{aligned}
& h_K^t |v-\pi_K v|_{H^t(K)}\le C h_K^s \|v\|_{H^s(K)},  
\quad
0\le t\le s,\quad 0\le s\le 1+\ell, \quad\text{ $v\in H^s(K)$},
\\
&\left(\frac{|K|}{|S|}\right)^\frac{1}{2}\|v-\pi_Kv\|_{L^2(S)}\le C h_K^s \|v\|_{H^s(K)}
,\quad 
\frac{1}{2}< s\le 1+\ell, \quad\text{ $v\in H^s(K)$};
\end{aligned}
\end{equation}

\begin{equation}\label{Eq:interpolation error-1}
\begin{aligned}
\|v-\pi_Kv\|_{W^{t,q}(K)}\le C |K|^{1/q-1/p} h_K^{s-t} \|v\|_{W^{s,p}(K)}, \quad
0\le s\le 1+\ell, \quad\text{ $v\in W^{s,p}(K)$},
\end{aligned}
\end{equation}
where $p,q\ge 1$ and $s\ge t\ge 0$ are real numbers such that $W^{s,p} \hookrightarrow W^{t,q}$, 
$
1\le q\le q^*,\quad 0\le t\le t^*,\quad 1/q*=1/p-(s-t^*)/d>0$. In this assumption, if necessary, we may replace $\pi_K$, $\|v\|_{H^s(K)}$ and $\|v\|_{W^{s,p}(K)}$ by $\pi_{\Delta_K}$, $\|v\|_{H^s(\Delta_K)}$ and $\|v\|_{W^{s,p}(\Delta_K)}$, respectively, where $\Delta_K$ stands for some patch of $K$.  

\begin{remark}
The above assumptions are used for the purpose of analysis. They implicitly impose conditions on the mesh.  If the mesh is of simplexes (triangles in two dimensions or tetrahedra in three dimensions), the above assumptions can be easily verified under the classical shape-regularity condition $h_K/\rho_K\le C$ where $\rho_K$ denotes the diameter of the largest ball inscribed $K$ and under the classical condition $h_K\le C h_\Lambda$ on the nonconformity of the mesh.   
\end{remark}

\subsection{Finite element method}

We define the finite element space for approximating the solution 
\begin{equation}\label{Eq:FEM space of solution}
U_h=\{v\in L^2(\Omega): v|_K\in P_\ell(K),\forall K\in{\cal T}_h\},\quad \ell\ge 1. 
\end{equation}

For $\Lambda\in\mathcal{F}_h^0$, $\Lambda=\partial K_1\cap \partial K_2$, we assign ${\bf n}_\Lambda$ to denote the unit normal vector to $\Lambda$, with a fixed direction orienting from $K_1$ to $K_2$. For $\Lambda\in{\cal F}_{\Gamma,h}$, then ${\bf n}_\Lambda$ still denotes the outward unit vector to $\Lambda$. Define averages and jumps as follows:

\begin{equation}\label{Eq:Average and Jump of scalar variable}
\Lbr v\Rbr =\dfrac{v|_{K_1}+v|_{K_2}}{2}, \quad \Lbrack v\Rbrack =v|_{K_1}-v|_{K_2},
\end{equation}

\begin{equation}\label{Eq:Average and Jump of normal trace}
\Lbr {\bm\tau}\cdot{\bf n}_\Lambda\Rbr =\dfrac{({\bm\tau}|_{K_1}+{\bm\tau}|_{K_2})\cdot{\bf n}_\Lambda}{2}, \quad \Lbrack {\bm\tau}\cdot{\bf n}_\Lambda\Rbrack =({\bm\tau}|_{K_1}-{\bm\tau}|_{K_2})\cdot{\bf n}_\Lambda.
\end{equation}
If $\Lambda\in{\cal F}_{\Gamma,h}$ and $\Lambda\subset\partial K$, then 
$
\Lbr v\Rbr =
\Lbrack v\Rbrack =v|_K$,  
and $
\Lbr {\bm\tau}\cdot{\bf n}_\Lambda\Rbr =
\Lbrack {\bm\tau}\cdot{\bf n}_\Lambda\Rbrack ={\bm\tau}\cdot{\bf n}
$, where ${\bf n}={\bf n}_\Lambda$ is the outward unit normal vector to $\Gamma$ (and to $\partial K$). In addition, define the local inflow boundary and the local outflow boundary of $\partial K$: 

\begin{equation*}
\begin{aligned}
&\partial_- K=\{{\bf x}\in \partial K:{\bf u}({\bf x})\cdot{\bf n}({\bf x}) < 0\},
\\
&\partial_+ K=\{{\bf x}\in \partial K:{\bf u}({\bf x})\cdot{\bf n}({\bf x})\ge 0\}.
\end{aligned}
\end{equation*}
Obviously, if $\Lambda=\partial K\cap \partial K_1$ and $\Lambda\subseteq\partial_- K$, then $\Lambda\subseteq \partial_+ K_1$. Introduce the notations of the inner products:
$
(u,v)_{L^2({\cal T}_h)}=\sum_{K\in{\cal T}_h}(u,v)_{L^2(K)}$, $
\langle p,q\rangle_{L^2({\cal G}_h)}=\sum_{\Lambda\in{\cal G}_h} \langle p,q\rangle_{L^2(\Lambda)}$, 
where ${\cal G}_h$ represents any subset of ${\cal F}_h$, and the induced norms:$
\|v\|_{L^2({\cal T}_h)}^2=\sum_{K\in{\cal T}_h}\|v\|_{L^2(K)}^2$, $
\|q\|_{L^2({\cal G}_h)}^2=\sum_{\Lambda\in{\cal G}_h} \|q\|_{L^2(\Lambda)}^2$. 

We are now able to define the finite element method. Since the diffusion may vanish somewhere of $\Omega$ and since the discretization of the advection term already controls the jumps in the normal direction of the advection velocity $\mathbf{u}$ across every interelement boundary, we only need to artificially penalize the jumps across those element boundaries with the nonzero diffusion. Thus, we first divide  $\mathcal{F}_h^0$ into two subsets:   
\begin{equation}\label{eq:diffusion element boundary-0}
\mathcal{F}_h^0=\mathcal{F}_h^{\text{df-df},0}\cup (\mathcal {F}_h^0\setminus \mathcal{F}_h^{\text{df-df},0}), 
\end{equation} 
where $\mathcal{F}_h^{\text{df-df},0}$ denotes the subset of the diffusion-diffusion (df-df) element boundaries of $\mathcal{F}_h^0$:   
\begin{equation}\label{eq:diffusion element boundary}
\begin{aligned}
\mathcal{F}_h^{\text{df-df},0}=\{& \Lambda\in\mathcal{F}_h^0: \Lambda=\partial K_1\cap\partial K_2, 
\\
& \text{both }\mathbf{n}_{\partial K_1}(\mathbf{x})^\top A(\mathbf{x})\mathbf{n}_{\partial K_1}(\mathbf{x})>0, \text{ a.e. } \mathbf{x}\in\bar{\Lambda}
\\
&\text{ and }\mathbf{n}_{\partial K_2}(\mathbf{x})^\top A(\mathbf{x})\mathbf{n}_{\partial K_2}(\mathbf{x}))>0, \text{ a.e. }\mathbf{x}\in\bar{\Lambda}\},
\end{aligned}
\end{equation}   
where $\mathbf{n}_{\partial K_i}$ denotes the outward unit normal to $\partial K_i$, $i=1,2$, while $\mathcal{F}_h^{\text{ad},0}:=\mathcal {F}_h^0\setminus \mathcal{F}_h^{\text{df-df},0}$ denotes the subset of advection (ad) element boundaries
\begin{equation}\label{eq:advection element boundary}
\begin{aligned}
\mathcal{F}_h^{\text{ad},0}=\{& \Lambda\in\mathcal{F}_h^0: \Lambda=\partial K_1\cap\partial K_2, 
\\
& \text{either } \mathbf{n}_{\partial K_1}(\mathbf{x})^\top A(\mathbf{x})\mathbf{n}_{\partial K_1}(\mathbf{x})>0 \text{ and }\mathbf{n}_{\partial K_2}(\mathbf{x})^\top A(\mathbf{x})\mathbf{n}_{\partial K_2}(\mathbf{x}))=0 \text{ a.e. } \mathbf{x}\in \bar{\Lambda},
\\
& \text{ or }
\mathbf{n}_{\partial K_1}(\mathbf{x})^\top A(\mathbf{x})\mathbf{n}_{\partial K_1}(\mathbf{x}))=0 \text{ and } \mathbf{n}_{\partial K_2}(\mathbf{x})^\top A(\mathbf{x})\mathbf{n}_{\partial K_2}(\mathbf{x})>0 \text{ a.e. } \mathbf{x}\in\bar{\Lambda},
\\
&\text{ or } \mathbf{n}_{\partial K_1}(\mathbf{x})^\top A(\mathbf{x})\mathbf{n}_{\partial K_1}(\mathbf{x})=0 \text{ and }\mathbf{n}_{\partial K_2}(\mathbf{x})^\top A(\mathbf{x})\mathbf{n}_{\partial K_2}(\mathbf{x})=0 \text{ a.e. } \mathbf{x}\in\bar{\Lambda}\}.
\end{aligned}
\end{equation} 
We only need to identify $\mathcal{F}_h^{\text{df-df},0}$, \textit{while other subsets are used only for theoretical analysis.} In addition, we need to identify the subset $\mathcal{F}_{\text{df},D,h}$ of the element boundaries on $\Gamma_{\text{df},D}$, which is the set of the intersections of the element boundary $\partial K$ of every $K\in\mathcal{T}_h$ with $\Gamma_{\text{df},D}$.   
  
Let $A_{\max,K}$ denote the interior local maximum over $\bar{K}$ of $K\in\mathcal{T}_h$, a constant defined by 
\begin{equation}
\xi^\top A(\mathbf{x}) \xi\le A_{\max,K} |\xi|^2\quad\forall\xi\in\mathbb{R}^d\quad\mbox{a.e. $\mathbf{x}\in K$}.
\end{equation}
Restricted to both sides of $\Lambda=\partial K_1\cap\partial K_2\in \mathcal{F}_h^{\text{df-df},0}$ or to $\Lambda=\partial K\in\mathcal{F}_{\text{df},D,h}$,  
the above constants can be similarly defined, respectively denoted by $A_{\max,\Lambda, \partial K_1}$ and $A_{\max,\Lambda, \partial K_2}$ or $A_{\max,\Lambda,\partial K}$. We must have 
\begin{equation*}
\begin{aligned}
&\min(A_{\max,\Lambda, \partial K_1}, A_{\max,\Lambda, \partial K_2})>0\quad\text{ on } \Lambda=\partial K_1\cap\partial K_2\in\mathcal{F}_h^{\text{df-df},0},
\\
&A_{\max,\Lambda,\partial K}>0\quad\text{ on }\Lambda 
\in\mathcal{F}_{\text{df},D,h}. 
\end{aligned}
\end{equation*}
Let
\begin{equation}
\begin{aligned}
& A_{\Lambda}:=\frac{A_{\max,\Lambda,\partial K_1}+A_{\max,\Lambda,\partial K_2}}{2} 
\quad \text{ where } \Lambda=\partial K_1\cap\partial K_2\in\mathcal{F}_h^{\text{df-df},0},
\\
& A_{\Lambda}:=A_{\max,\Lambda,\partial K}\quad \text{ where } \Lambda=\partial K\cap\Gamma_{\text{df},D}\in\mathcal{F}_{\text{df},D,h}.
\end{aligned}
\end{equation}
Introduce the inflow subset $\Gamma_{\text{df},D,-}$ of $\Gamma_{\text{df},D}$, defined by
\begin{equation}\label{eq:inflow df-D}
\Gamma_{\text{df},D,-}=\{\mathbf{x}\in\Gamma_{\text{df},D}: \mathbf{u}(\mathbf{x})\cdot\mathbf{n}(\mathbf{x})<0\}. 
\end{equation}

For a varying, low regularity advection velocity $\mathbf{u}$, we need to define a parameter associated with the advection term. Such parameter is called the stabilizing parameter in the literature of stabilized finite element method. We then need to define a stabilization which is similar to the one in the SUPG method. We can thus obtain the SUPG-type stability and the SUPG-type error bound.       
Let $K\in\mathcal{T}_h$ and introduce
\begin{equation}\label{eq:epsilon diffusion}
\begin{aligned}
&\Upsilon_{\text{df},\partial K}=\{\Lambda\in\mathcal{F}_h^{\text{df-df},0}\cup\mathcal{F}_{\text{df},D,h}: \Lambda\subset\partial K\}. 
\end{aligned}
\end{equation}
Then, define 
\begin{equation}
\varepsilon_{\text{df},K}=\max\{A_{\max,K}, A_{
\Lambda 
} \text{ for all }\Lambda\in\Upsilon_{\text{df},\partial K}\}. 
\end{equation}
Let $\alpha>0$ be a constant, which is determined later mainly by the constant in the inverse inequality.  Define a SUPG-type bilinear form  
\begin{equation}\label{eq:stabilization}
S_h(c,v)=\sum_{K\in\mathcal{T}_h} (-\iv(A\nabla c)+\frac{\bf u}{2}\cdot\nabla c+\frac{\iv({\bf u}c)}{2}+\gamma_0 c, \tau_K (\frac{\bf u}{2}\cdot\nabla v+\frac{\iv({\bf u}v)}{2}-\gamma_0 v))_{L^2(K)}, 
\end{equation}
where, inspired by \cite{FrancaValentine2000}, $\tau_K$  
is defined by 
\begin{equation}\label{eq:stabilizing parameter pointwise}
\tau_K(\mathbf{x})=\frac{\alpha h_K^2 }{\varepsilon_{\text{df},K} 
+h_K |\mathbf{u}(\mathbf{x})|_2+h_K^2|\iv\mathbf{u}(\mathbf{x})|+h_K^2 |\gamma_0(\mathbf{x})|}\quad\mathbf{x}\in\bar{\Omega},
\end{equation}
where 
$|\cdot|_2$ and $|\cdot|$ stand for the Euclidean  
$2$-norm and absolute value, respectively. The consistent SUPG-type linear form is
\begin{equation}\label{eq:stabilization linear form}
T_h(v)=\sum_{K\in\mathcal{T}_h} (g, \tau_K (\frac{\bf u}{2}\cdot\nabla v+\frac{\iv({\bf u}v)}{2}-\gamma_0 v))_{L^2(K)}.
\end{equation} 
 
Define the bilinear form and the linear form respectively as follows:
\begin{equation}\label{Eq:Bilinear form minus}
\begin{aligned}
\mathscr{L}_h(c,v) :=&
 \sum_{K\in\mathcal{T}_h} (A\nabla c,\nabla v)_{L^2(K)}-\sum_{\Lambda\in\mathcal{F}_h^{\text{df-df},0}\cup \mathcal{F}_{\text{df},D,h}} \langle \Lbr (A\nabla c)\cdot\mathbf{n}_\Lambda\Rbr, \Lbrack v\Rbrack\rangle_{L^2(\Lambda)} 
\\
&+\sum_{\Lambda\in\mathcal{F}_h^{\text{df-df},0}\cup \mathcal{F}_{\text{df},D,h}} \langle \Lbr (A\nabla v)\cdot\mathbf{n}_\Lambda\Rbr, \Lbrack c\Rbrack\rangle_{L^2(\Lambda)}
\\
&+ \sum_{\Lambda\in\mathcal{F}_h^{\text{df-df},0}} |\Lambda|^{-\frac{1}{d-1}} 
A_{\Lambda} \langle\Lbrack c\Rbrack, \Lbrack v\Rbrack\rangle_{L^2(\Lambda)}
+ \sum_{\Lambda\in\mathcal{F}_{\text{df},D,h}} |\Lambda|^{-\frac{1}{d-1}} 
A_{\Lambda} \langle c, v\rangle_{L^2(\Lambda)}
\\
&+\sum\limits_{K\in{\cal T}_h}(\frac{\bf u}{2}\cdot\nabla c+\frac{\iv({\bf u}c)}{2},v)_{L^2(K)}-\sum\limits_{K\in{\cal T}_h} \langle v, {\bf u}\cdot{\bf n} \Lbrack c\Rbrack \rangle_{L^2(\partial_- K)}
\\
&+(
\gamma_0 c,v)_{L^2({\cal T}_h)},
\end{aligned}
\end{equation}
\begin{equation}\label{Eq:Bilinear form}
\mathscr{L}_h^S(c,v):=\mathscr{L}_h(c,v)+S_h(c,v),
\end{equation}

\begin{equation}\label{Eq:Linear form minus}
\begin{aligned}
\mathscr{R}_h(v):=&(g,v)_{L^2(\Omega)}+\sum_{\Lambda\in\mathcal{F}_{\text{df},D,h}} \langle (A\nabla v)\cdot\mathbf{n},\kappa\rangle_{L^2(\Lambda)}
\\
&+ \sum_{\Lambda\in\mathcal{F}_{\text{df},D,h}} |\Lambda|^{-\frac{1}{d-1}} 
A_{\Lambda} 
\langle\kappa, v\rangle_{L^2(\Lambda)}  
-\langle v,\mathbf{u}\cdot\mathbf{n} \kappa\rangle_{L^2(\Gamma_{\text{ad},-}\cup\Gamma_{\text{df},D,-})} 
\\
& 
-\langle v,\chi^{-}\rangle_{L^2(\Gamma_{\text{df},N,-})}
-\langle v,\chi^{+}\rangle_{L^2(\Gamma_{\text{df},N,+})}, 
\end{aligned}
\end{equation}
\begin{equation}\label{Eq:Linear form}
\mathscr{R}_h^T(v):=\mathscr{R}_h(v)+T_h(v).
\end{equation}

The DG problem reads as follows: Find $c_h\in U_h$ such that

\begin{equation}\label{Eq:FEM-2 problem}
\begin{array}{l}
\mathscr{L}_h^S(c_h,v_h)=\mathscr{R}_h^T(v_h)\quad\forall v_h\in U_h.
\end{array}
\end{equation}

We can further reduce the amount of penalties. Introduce a subset of $\mathcal{F}_h^{\text{\rm df-df},0}$, called the subset of the diffusion-diffusion with diffusion-dominated (dfd)  element boundaries, defined by
\begin{equation}\label{eq:df-df-dfd set}
\begin{aligned}
&\mathcal{F}_h^{\text{\rm df-df-dfd},0}=\{\Lambda\in\mathcal{F}_h^{\text{\rm df-df},0}: |\Lambda|^\frac{1}{d-1} 
|\mathbf{u}(\mathbf{x})\cdot\mathbf{n}(\mathbf{x})|< A_\Lambda 
,\text{ a.e. } \mathbf{x}\in\bar{\Lambda}\}.
\end{aligned}
\end{equation}  
Similarly, for the diffusion Dirichlet element boundary set $\mathcal{F}_{\text{\rm df,D},h}$, we introduce the subset of the diffusion with diffusion-dominated Dirichlet element boundaries, defined by   
\begin{equation}\label{eq:df-dfd-D set}
\begin{aligned}
&\mathcal{F}_{\text{\rm df-dfd,D},h}=\{\Lambda\in\mathcal{F}_{\text{\rm df},D,h}: |\Lambda|^\frac{1}{d-1}  
|\mathbf{u}(\mathbf{x})\cdot\mathbf{n}(\mathbf{x})|< A_\Lambda
,\text{ a.e. } \mathbf{x}\in\bar{\Lambda}\}.
\end{aligned}
\end{equation}
Then, the penalization term is replaced by 
\begin{equation}\label{eq:reduced penalty}
\begin{aligned}
&\sum_{\Lambda\in\mathcal{F}_h^{\text{\rm df-df-dfd},0}} |\Lambda|^{-\frac{1}{d-1}} 
A_\Lambda\langle\Lbrack c\Rbrack, \Lbrack v\Rbrack\rangle_{L^2(\Lambda)}+\sum_{\Lambda\in\mathcal{F}_{\text{\rm df-dfd,D},h}} |\Lambda|^{-\frac{1}{d-1}} 
A_\Lambda\langle c, v \rangle_{L^2(\Lambda)}.
\end{aligned}
\end{equation} 
Denote by $\mathcal{F}_h^{\text{\rm df-df-add},0}:=\mathcal{F}_h^{\text{\rm df-df},0}\setminus \mathcal{F}_h^{\text{\rm df-df-dfd},0}$. The subset $\mathcal{F}_h^{\text{\rm df-df-add},0}$ is called the subset of the diffusion-diffusion with advection-dominated (add) element boundaries. Denote by $\mathcal{F}_{\text{\rm df-add,D},h}:=\mathcal{F}_{\text{\rm df,D},h} \setminus \mathcal{F}_{\text{\rm df-dfd,D},h}$, called the subset of the diffusion with advection-dominated Dirichlet element boundaries. These two subsets read as follows: 
\begin{equation}\label{eq:advection-dominated df-df-0}
\begin{aligned}
&\mathcal{F}_h^{\text{\rm df-df-add},0}=\{\Lambda\in\mathcal{F}_h^{\text{\rm df-df},0}: |\Lambda|^\frac{1}{d-1} 
|\mathbf{u}(\mathbf{x})\cdot\mathbf{n}(\mathbf{x})\ge A_\Lambda 
,\text{ a.e. } \mathbf{x}\in\bar{\Lambda}\}.
\end{aligned}
\end{equation}  
\begin{equation}\label{eq:advection-dominated df-df-1}
\begin{aligned}
&\mathcal{F}_{\text{\rm df-add,D},h}=\{\Lambda\in\mathcal{F}_{\text{\rm df,D},h}: |\Lambda|^\frac{1}{d-1} 
|\mathbf{u}(\mathbf{x})\cdot\mathbf{n}(\mathbf{x})|\ge A_\Lambda
,\text{ a.e. } \mathbf{x}\in\bar{\Lambda}\}.
\end{aligned}
\end{equation}
On the diffusion-diffusion with advection-dominated element boundaries of $\mathcal{F}_h^{\text{\rm df-df-add},0}$ and the diffusion with advection-dominated diffusion Dirichlet element boundaries of $\mathcal{F}_{\text{\rm df-add,D},h}$, we do not need to penalize anything. In other words, we need only penalize the diffusion dominated element boundaries in the set of diffusion-diffusion element boundaries and in the set of the diffusion Dirichlet element boundaries. This is consistent with the DG method of a pure diffusion problem (i.e., $\mathbf{u}=0$).  For convenience, we introduce 
\begin{equation}\label{Eq:Bilinear form-1}
\begin{aligned}
\mathscr{L}_{{\rm dfd},h}^S(c,v):=&
 \sum_{K\in\mathcal{T}_h} (A\nabla c,\nabla v)_{L^2(K)}-\sum_{\Lambda\in\mathcal{F}_h^{\text{\rm df-df},0}\cup \mathcal{F}_{\text{\rm df,D},h}} \langle \Lbr (A\nabla c)\cdot\mathbf{n}_\Lambda\Rbr, \Lbrack v\Rbrack\rangle_{L^2(\Lambda)} 
\\
&+\sum_{\Lambda\in\mathcal{F}_h^{\text{\rm df-df},0}\cup \mathcal{F}_{\text{\rm df,D},h}} \langle \Lbr (A\nabla v)\cdot\mathbf{n}_\Lambda\Rbr, \Lbrack c\Rbrack\rangle_{L^2(\Lambda)}
\\
&+ \sum_{\Lambda\in\mathcal{F}_h^{\text{\rm df-df-dfd},0}} |\Lambda|^{-\frac{1}{d-1}} 
A_{\Lambda} \langle\Lbrack c\Rbrack, \Lbrack v\Rbrack\rangle_{L^2(\Lambda)}
+ \sum_{\Lambda\in\mathcal{F}_{\text{\rm df-dfd},D,h}} |\Lambda|^{-\frac{1}{d-1}} 
A_{\Lambda} \langle c, v\rangle_{L^2(\Lambda)}
\\
&+\sum\limits_{K\in{\cal T}_h}(\frac{\bf u}{2}\cdot\nabla c+\frac{\iv({\bf u}c)}{2},v)_{L^2(K)}-\sum\limits_{K\in{\cal T}_h} \langle v, {\bf u}\cdot{\bf n} \Lbrack c\Rbrack \rangle_{L^2(\partial_- K)}
\\
&+(\gamma_0 c,v)_{L^2({\cal T}_h)}+S_h(c,v),
\end{aligned}
\end{equation}
while the linear form in the right-hand side is the same as  the one in \eqref{Eq:Linear form}.  
The DG problem is to find $c_h\in U_h$ such that 
\begin{equation}\label{Eq:FEM-2 problem-1}
\mathscr{L}_{{\rm dfd},h}^S(c_h,v_h)=\mathscr{R}_h^T(v_h)\quad\forall v_h\in U_h.
\end{equation}
Note that \eqref{Eq:FEM-2 problem-1} differs from \eqref{Eq:FEM-2 problem} by the penalty terms in the bilinear forms. 

\begin{remark}
As far as we know, the 
DG methods in the few literature (\cite{HoustonSchwabSuli2002}, \cite{ProftRiviere2009}, \cite{DiPietroDroniouErn2015}, \cite{DiPietroErnGuermond2008}) use the `full' 
averages, jumps, and penalizations in the sense that all 
interelement boundaries of $\Omega$ are involved with averages, jumps and penalties.    
\end{remark}

\begin{remark}\label{rem:multiple partition}
Although we have developed  multiple partitions of the set of the interelement boundaries, including the further multiple partitions in the next subsection, the multiple partitions are only used for the theoretical analysis, except that we need only to identify the set $\mathcal{F}_h^{\text{df-df},0}$ of the diffusion-diffusion interelement boundaries. Such set can be easily determined in practice. For example, let $A=\varepsilon(\mathbf{x})I$, here $I$ is the identity matrix and $\varepsilon(\mathbf{x})\ge 0$ a.e. $\mathbf{x}\in\bar{\Omega}$; to find the set $\mathcal{F}_h^{\text{df-df},0}$ one needs only to find where $\varepsilon>0$; in practice, the regions where $\varepsilon>0$ can be known a priori.     
\end{remark}

\subsection{Consistency and stability}
\label{subsec:Stability and consistency}

Now, we develop the analysis for the consistency and the stability for the DG problem \eqref{Eq:FEM-2 problem-1}. The analysis is straightforwardly applicable to the DG problem \eqref{Eq:FEM-2 problem}, because they only differ in the artificial penalty terms.   

We first examine the consistency property. The following consistency result rigorously justifies the correct and minimal averages, jumps and penalties in the proposed DG method. For that purpose, we need to further 
divide the subset $\mathcal{F}_h^{\text{ad},0}$ as given by \eqref{eq:advection element boundary}. Let $\mathcal{F}_h^{\text{ad},0}=\mathcal{F}_h^{\text{ad-df},0}\cup\mathcal{F}_h^{\text{ad-ad},0}$:
\begin{equation}\label{eq:ad-df set}
\begin{aligned}
\mathcal{F}_h^{\text{ad-df},0}=\{& \Lambda\in\mathcal{F}_h^{\text{ad},0}: \Lambda=\partial K_1\cap\partial K_2, 
\\
& \text{either } \mathbf{n}_{\partial K_1}(\mathbf{x})^\top A(\mathbf{x})\mathbf{n}_{\partial K_1}(\mathbf{x})>0 \text{ and }\mathbf{n}_{\partial K_2}(\mathbf{x})^\top A(\mathbf{x})\mathbf{n}_{\partial K_2}(\mathbf{x}))=0 \text{ a.e. } \mathbf{x}\in \bar{\Lambda},
\\
& \text{ or }
\mathbf{n}_{\partial K_1}(\mathbf{x})^\top A(\mathbf{x})\mathbf{n}_{\partial K_1}(\mathbf{x}))=0 \text{ and } \mathbf{n}_{\partial K_2}(\mathbf{x})^\top A(\mathbf{x})\mathbf{n}_{\partial K_2}(\mathbf{x})>0 \text{ a.e. } \mathbf{x}\in\bar{\Lambda}\}.
\end{aligned}
\end{equation}
\begin{equation}\label{eq:ad-ad set}
\begin{aligned}
\mathcal{F}_h^{\text{ad-ad},0}=\{& \Lambda\in\mathcal{F}_h^{\text{ad},0}: \Lambda=\partial K_1\cap\partial K_2, 
\\
& \text{ both }\mathbf{n}_{\partial K_1}(\mathbf{x})^\top A(\mathbf{x})\mathbf{n}_{\partial K_1}(\mathbf{x}))=0 \text{ and } \mathbf{n}_{\partial K_2}(\mathbf{x})^\top A(\mathbf{x})\mathbf{n}_{\partial K_2}(\mathbf{x})=0 \text{ a.e. } \mathbf{x}\in \bar{\Lambda}\}.
\end{aligned}
\end{equation}
Moreover, $\mathcal{F}_h^{\text{ad-df},0}$ is divided into inflow and outflow two parts: $\mathcal{F}_h^{\text{ad-df},0}=\mathcal{F}_h^{\text{ad-df},0,-}\cup\mathcal{F}_h^{\text{ad-df},0,+}$
\begin{equation}\label{eq:ad-df-inflow-outflow}
\begin{aligned}
& \mathcal{F}_h^{\text{ad-df},0,-}=\{\Lambda\in\mathcal{F}_h^{\text{ad},0}: \mathbf{u}(\mathbf{x})\cdot\mathbf{n}(\mathbf{x})<0,\text{ a.e. } \mathbf{x}\in\bar{\Lambda}\},
\\
& \mathcal{F}_h^{\text{ad-df},0,+}=\{\Lambda\in\mathcal{F}_h^{\text{ad},0}: \mathbf{u}(\mathbf{x})\cdot\mathbf{n}(\mathbf{x})\ge 0,\text{ a.e. } \mathbf{x}\in\bar{\Lambda}\}.
\end{aligned}
\end{equation}
\textit{We must emphasize that these subsets are introduced only for the theoretical purpose; they do not enter into the DG method and the implementation.} Then, 
\begin{equation}\label{eq:correct boundary subset}
\mathcal{F}_h^0=\mathcal{F}_h^{\text{df-df},0}\cup\mathcal{F}_h^{\text{ad-df},0,+}\cup\mathcal{F}_h^{\text{ad-df},0,-}\cup\mathcal{F}_h^{\text{ad-ad},0}.
\end{equation} 

\begin{theorem}\label{thm:consistency}
Let $c$ be the exact solution of \eqref{Eq:BVP} or \eqref{Eq:Equivalent PDE}-\eqref{Eq:Boundary essential and natrual condition}. 
In addition to the assumptions \eqref{eq:advection Hdiv}, \eqref{eq:normal continuity}, assume that  
\begin{subequations}
\begin{equation}\label{eq:H1 continuity}
\Lbrack c\Rbrack|_{\Lambda}=0\quad\forall\Lambda\in \mathcal{F}_h^{\text{\rm df-df},0},
\end{equation}
\begin{equation}\label{eq:normal H1 continuity-ad-df}
\mathbf{u}\cdot\mathbf{n}\Lbrack c\Rbrack=0\quad\forall\Lambda\in\mathcal{F}_h^{\text{\rm ad-df},0,+},
\end{equation}
\begin{equation}\label{eq:normal H1 continuity-ad-ad}
\mathbf{u}\cdot\mathbf{n}\Lbrack c\Rbrack=0\quad\forall\Lambda\in\mathcal{F}_h^{\text{\rm ad-ad},0}.
\end{equation}
\end{subequations}
Then
\begin{equation}\label{eq:consistency}
\mathscr{L}_{\text{\rm dfd},h}^S(c,v_h)=\mathscr{R}_h^T(v_h)
\quad\forall v_h\in U_h.
\end{equation}
\end{theorem}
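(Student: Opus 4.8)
The plan is to show that the exact solution $c$ satisfies the discrete identity \eqref{eq:consistency} by an element-wise integration-by-parts argument, after which I match the resulting boundary contributions, face by face along the partition \eqref{eq:correct boundary subset}, against the jump, average and penalty terms of $\mathscr{L}_{\text{dfd},h}^S$ and the data terms of $\mathscr{R}_h^T$. First I would dispose of the stabilization. Because $c$ solves \eqref{Eq:Equivalent PDE} strongly on each $K$ (this is where the assumed element-wise regularity of $c$ enters, so that $-\iv(A\nabla c)+\frac{\mathbf u}{2}\cdot\nabla c+\frac{\iv(\mathbf u c)}{2}+\gamma_0 c=g$ holds in $L^2(K)$), the residual that forms the first argument of $S_h(c,\cdot)$ equals $g$, whence $S_h(c,v_h)=T_h(v_h)$ termwise by comparison with \eqref{eq:stabilization linear form}. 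It therefore suffices to prove $\mathscr{L}_h(c,v_h)=\mathscr{R}_h(v_h)$.

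Next I would test \eqref{Eq:Equivalent PDE} with $v_h|_K$, integrate over $K$, integrate by parts \emph{only} the diffusion term, and sum over $K\in\mathcal{T}_h$. This reproduces exactly the diffusion, advection and reaction volume terms of $\mathscr{L}_h$, relating them to $(g,v_h)_{L^2(\Omega)}+\sum_{K}\int_{\partial K}(A\nabla c)\cdot\mathbf n_{\partial K}v_h$. Substituting this relation into $\mathscr{L}_h(c,v_h)$ cancels the volume terms and reduces the claim to showing that the diffusion boundary integral $\sum_K\int_{\partial K}(A\nabla c)\cdot\mathbf n_{\partial K}v_h$, together with the two average-flux terms, the penalty term, and the upwind term $-\sum_K\langle v_h,\mathbf u\cdot\mathbf n\Lbrack c\Rbrack\rangle_{L^2(\partial_-K)}$, reproduces precisely the data terms of $\mathscr{R}_h$.

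Then comes the face-by-face bookkeeping. The driving observation is that, $A$ being symmetric positive semidefinite, $\mathbf n^\top A\mathbf n=0$ forces $A\mathbf n=0$, so the diffusion flux $(A\nabla c)\cdot\mathbf n$ vanishes on any element side lying in the advection part. On $\mathcal{F}_h^{\text{df-df},0}$ I rewrite the summed diffusion flux through the identity $a_1b_1-a_2b_2=\Lbr a\Rbr\Lbrack b\Rbrack+\Lbrack a\Rbrack\Lbr b\Rbr$ with $a=(A\nabla c)\cdot\mathbf n_\Lambda$, $b=v_h$: the average part cancels the term $\langle\Lbr(A\nabla c)\cdot\mathbf n_\Lambda\Rbr,\Lbrack v_h\Rbrack\rangle$, while the flux-jump part, the symmetrization term, the penalty, and the upwind contribution all vanish since $\Lbrack c\Rbrack=0$ there by \eqref{eq:H1 continuity} (which, combined with the total-flux continuity \eqref{eq:normal continuity}, also yields $\Lbrack(A\nabla c)\cdot\mathbf n_\Lambda\Rbrack=0$). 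On $\mathcal{F}_h^{\text{ad-ad},0}$ the diffusion flux vanishes on both sides and the upwind term vanishes by \eqref{eq:normal H1 continuity-ad-ad}; on the outflow set $\mathcal{F}_h^{\text{ad-df},0,+}$ both the surviving diffusion flux and the upwind term carry the factor $\mathbf u\cdot\mathbf n\Lbrack c\Rbrack$, zero by \eqref{eq:normal H1 continuity-ad-df}. The crux, and the step I expect to be the main obstacle, is the inflow set $\mathcal{F}_h^{\text{ad-df},0,-}$, where no continuity of $c$ is assumed: here the advection side $K_2$ contributes no diffusion flux, so \eqref{eq:normal continuity} collapses to $(A\nabla c)|_{K_1}\cdot\mathbf n_\Lambda=(\mathbf u\cdot\mathbf n_\Lambda)\Lbrack c\Rbrack$ on the diffusion side; inserting this identity shows that the surviving diffusion boundary integral exactly cancels the interior upwind term on that face, requiring no continuity of $c$. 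Carefully reconciling the global orientation $\mathbf n_\Lambda$ with the element-wise outward normals $\mathbf n_{\partial K_i}$, so that the inflow/outflow assignment of \eqref{eq:ad-df-inflow-outflow} matches the sign of $\mathbf u\cdot\mathbf n$ in the upwind term, is the delicate point.

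Finally, on $\mathcal{F}_{\Gamma,h}$ I would split by boundary type. On $\Gamma_{\text{ad}}$ the diffusion flux vanishes and only the upwind term survives, giving $-\langle v_h,\mathbf u\cdot\mathbf n\kappa\rangle$ on $\Gamma_{\text{ad},-}$ after inserting $c|_{\Gamma_D}=\kappa$. On $\Gamma_{\text{df},D}$ the term $\langle\Lbr(A\nabla c)\cdot\mathbf n_\Lambda\Rbr,\Lbrack v_h\Rbrack\rangle$ cancels the diffusion boundary integral, while the symmetrization term, the penalty, and the inflow upwind term produce the corresponding $\langle(A\nabla v_h)\cdot\mathbf n,\kappa\rangle$, $A_\Lambda\langle\kappa,v_h\rangle$ and $-\langle v_h,\mathbf u\cdot\mathbf n\kappa\rangle_{\Gamma_{\text{df},D,-}}$ data terms of $\mathscr{R}_h$ through $c=\kappa$. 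On $\Gamma_{\text{df},N,\pm}$ the Neumann conditions in \eqref{Eq:Boundary essential and natrual condition} convert the diffusion boundary integral, combined with the inflow upwind term on $\Gamma_{\text{df},N,-}$, into $-\langle v_h,\chi^{-}\rangle_{\Gamma_{\text{df},N,-}}$ and $-\langle v_h,\chi^{+}\rangle_{\Gamma_{\text{df},N,+}}$. Collecting all interior and boundary contributions yields $\mathscr{L}_h(c,v_h)=\mathscr{R}_h(v_h)$, and together with $S_h(c,v_h)=T_h(v_h)$ this gives \eqref{eq:consistency}.
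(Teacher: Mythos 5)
Your proposal is correct and follows essentially the same route as the paper's own proof: element-wise integration by parts of the diffusion term, the observation that $\mathbf n^\top A\mathbf n=0$ forces $A\mathbf n=0$ so the diffusion flux disappears on advection sides, the reduction of the total-flux continuity \eqref{eq:normal continuity} on $\mathcal{F}_h^{\text{ad-df},0}$ to the identity $(A\nabla c)\cdot\mathbf n_\Lambda=\mathbf u\cdot\mathbf n_\Lambda\Lbrack c\Rbrack$ on the diffusion side (which is exactly how the paper handles the inflow set, where both sides of the identity produce $-(\mathbf u\cdot\mathbf n\Lbrack c\Rbrack,v_h)_{L^2(\Sigma_{\text{ad-df},0,-})}$ and cancel), and the termwise identification $S_h(c,v_h)=T_h(v_h)$. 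The only cosmetic difference is that the paper expands $(g,v_h)$ on the right-hand side whereas you test the strong PDE and rearrange, which is the same computation.
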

\begin{proof}

First note that if $\mathbf{n}_{\partial K}(\mathbf{x})^\top A(\mathbf{x})\mathbf{n}_{\partial K}(\mathbf{x})=0$ a.e. $\mathbf{x}\in\bar{\Lambda}\subset\partial K$, where $\mathbf{n}_{\partial K}(\mathbf{x})$ is the outward unit normal to $K$, then   
\begin{equation}\label{eq:An0}
\begin{aligned}
&A(\mathbf{x})\mathbf{n}_{\partial K}(\mathbf{x})=0 \text{ a.e. } \mathbf{x}\in\bar{\Lambda}. 
\end{aligned}
\end{equation}

Second note that \eqref{eq:normal continuity} 
gives 
\begin{equation}\label{eq:normal continuity-C}
\Lbrack (-A\nabla c+\mathbf{u}c)\cdot\mathbf{n}_\Lambda\Rbrack=0\quad\forall \Lambda\in \mathcal{F}_h^0,
\end{equation}
while \eqref{Eq:Boundary essential and natrual condition} gives 
\begin{equation*}
(-A\nabla c+\mathbf{u}c)\cdot\mathbf{n}|_{\Gamma_{\text{df},N,-}}=\chi^{-}, \quad -A\nabla c\cdot{\bf n}|_{\Gamma_{\text{df},N,+}}=\chi^{+}.
\end{equation*}
Moreover, \eqref{eq:normal H1 continuity-ad-ad} coincides with \eqref{eq:normal continuity-C} on $\mathcal{F}_h^{\text{\rm ad-ad},0}$.

Then, by the integration by parts, the conclusion \eqref{eq:consistency} follows from \eqref{Eq:Bilinear form minus}, \eqref{Eq:Linear form}, \eqref{Eq:BVP} or \eqref{Eq:Equivalent PDE}, \eqref{Eq:Boundary essential and natrual condition}, \eqref{eq:advection Hdiv}, \eqref{eq:normal continuity-C}, \eqref{eq:H1 continuity}, \eqref{eq:normal H1 continuity-ad-df},  \eqref{eq:normal H1 continuity-ad-ad}, \eqref{eq:An0}, and \eqref{eq:correct boundary subset}. 
The verification is detailed below, with two main steps. 

\textit{Step 1.} In the right side of \eqref{eq:consistency} (see \eqref{Eq:Linear form}), 
\begin{equation*}
\begin{aligned}
& (g,v_h)_{L^2(\mathcal{T}_h)}=\sum_{K\in\mathcal{T}_h}(-\iv (A\nabla c)+\frac{\mathbf{u}\cdot\nabla c}{2}+\frac{\iv(\mathbf{u}c)}{2}+(\gamma  +\frac{\iv\mathbf{u}}{2})c,v_h)_{L^2(K)},
\end{aligned}
\end{equation*}
where, by the integration by parts, we have
\begin{equation*}
\begin{aligned}
&\sum_{K\in\mathcal{T}_h}(-\iv (A\nabla c),v_h)_{L^2(K)}=\sum_{K\in\mathcal{T}_h}(A\nabla c),\nabla v_h)_{L^2(K)}+\sum_{K\in\mathcal{T}_h}(- (A\nabla c)\cdot\mathbf{n},v_h)_{L^2(\partial K)},
\end{aligned}
\end{equation*}
where 
\begin{equation*}
\begin{aligned}
\sum_{K\in\mathcal{T}_h}(- (A\nabla c)\cdot\mathbf{n},v_h)_{L^2(\partial K)}=&\sum_{\Lambda\in\mathcal{F}_h^0\cup\mathcal{F}_{\Gamma,h}}(\Lbr - (A\nabla c)\cdot\mathbf{n}\Rbr,\Lbrack v_h\Rbrack )_{L^2(\Lambda)}
+\sum_{\Lambda\in\mathcal{F}_h^0}(\Lbrack- (A\nabla c)\cdot\mathbf{n}\Rbrack,\Lbr v_h\Rbr)_{L^2(\Lambda)},
\end{aligned}
\end{equation*}
Because, on the advection boundary $\Gamma_\text{ad}$,  
\begin{equation*}
\sum_{\Lambda\subset\Gamma_{\text{ad}} 
}(\Lbr - (A\nabla c)\cdot\mathbf{n}\Rbr,\Lbrack v_h\Rbrack )_{L^2(\Lambda)}=\sum_{\Lambda\subset\Gamma_\text{ad} 
}(- (A\nabla c)\cdot\mathbf{n},v_h)_{L^2(\Lambda)}
=0,
\end{equation*}
we have
\begin{equation*}
\begin{aligned}
&\sum_{\Lambda\in\mathcal{F}_h^0\cup\mathcal{F}_{\Gamma,h}}(\Lbr - (A\nabla c)\cdot\mathbf{n}\Rbr,\Lbrack v_h\Rbrack )_{L^2(\Lambda)}=
\\
&\sum_{\Lambda\in\mathcal{F}_h^0\cup\mathcal{F}_{\text{df},D,h}
}(\Lbr - (A\nabla c)\cdot\mathbf{n}\Rbr,\Lbrack v_h\Rbrack )_{L^2(\Lambda)}
+\sum_{\Lambda\subset\Gamma_{\text{df},N} 
}(- (A\nabla c)\cdot\mathbf{n}, v_h )_{L^2(\Lambda)}
=
\\
& \sum_{\Lambda\in\mathcal{F}_h^0\cup\mathcal{F}_{\text{df},D,h}}(\Lbr - (A\nabla c)\cdot\mathbf{n}\Rbr,\Lbrack v_h\Rbrack )_{L^2(\Lambda)}
\\
&+
(\chi^-,v_h )_{L^2(\Gamma_{\text{df},N,-})} 
+
(-\mathbf{u}\cdot\mathbf{n}c,v_h )_{L^2(\Gamma_{\text{df},N,-})} 
+
(\chi^+,v_h )_{L^2(\Gamma_{\text{df},N,+})} 
.
\end{aligned}
\end{equation*}
Because
\begin{equation*}
\sum_{\Lambda\in\mathcal{F}_h^{\text{ad-ad},0}}(\Lbr - (A\nabla c)\cdot\mathbf{n}\Rbr,\Lbrack v_h\Rbrack )_{L^2(\Lambda)}=0,
\end{equation*}
we have 
\begin{equation*}
\begin{aligned}
&  \sum_{\Lambda\in\mathcal{F}_h^0}(\Lbr - (A\nabla c)\cdot\mathbf{n}\Rbr,\Lbrack v_h\Rbrack )_{L^2(\Lambda)}= \sum_{\Lambda\in\mathcal{F}_h^{\text{df-df},0}}(\Lbr - (A\nabla c)\cdot\mathbf{n}\Rbr,\Lbrack v_h\Rbrack )_{L^2(\Lambda)}
\\
&+\sum_{\Lambda\in\mathcal{F}_h^{\text{ad-df},0,+}}(\Lbr - (A\nabla c)\cdot\mathbf{n}\Rbr,\Lbrack v_h\Rbrack )_{L^2(\Lambda)}+\sum_{\Lambda\in\mathcal{F}_h^{\text{ad-df},0,-}}(\Lbr - (A\nabla c)\cdot\mathbf{n}\Rbr,\Lbrack v_h\Rbrack )_{L^2(\Lambda)}.
\end{aligned}
\end{equation*}
Because
\begin{equation*}
\Lbrack (-A\nabla c+\mathbf{u}c)\cdot\mathbf{n}\Rbrack=(-A\nabla c)\cdot\mathbf{n}+\mathbf{u}\cdot\mathbf{n} \Lbrack c\Rbrack=0\quad\mbox{on $\Lambda\in\mathcal{F}_h^{\text{ad-df},0}$},
\end{equation*}
we have 
\begin{equation*}
\begin{aligned}
\sum_{\Lambda\in\mathcal{F}_h^{\text{ad-df},0,+}}(\Lbr - (A\nabla c)\cdot\mathbf{n}\Rbr,\Lbrack v_h\Rbrack )_{L^2(\Lambda)}=&\frac{1}{2}\sum_{\Lambda\in\mathcal{F}_h^{\text{ad-df},0,+}}(- (A\nabla c)\cdot\mathbf{n},\Lbrack v_h\Rbrack )_{L^2(\Lambda)}
\\
=& \frac{1}{2}\sum_{\Lambda\in\mathcal{F}_h^{\text{ad-df},0,+}}(- \mathbf{u}\cdot\mathbf{n} \Lbrack c\Rbrack,\Lbrack v_h\Rbrack )_{L^2(\Lambda)}
\\
=& 0\quad\mbox{(because $\mathbf{u}\cdot\mathbf{n} \Lbrack c\Rbrack=0$ on $\mathcal{F}_h^{\text{ad-df},0,+}$)}, 
\end{aligned}
\end{equation*}
\begin{equation*}
\begin{aligned}
\sum_{\Lambda\in\mathcal{F}_h^{\text{ad-df},0,-}}(\Lbr - (A\nabla c)\cdot\mathbf{n}\Rbr,\Lbrack v_h\Rbrack )_{L^2(\Lambda)}=&\frac{1}{2}\sum_{\Lambda\in\mathcal{F}_h^{\text{ad-df},0,-}}(- (A\nabla c)\cdot\mathbf{n},\Lbrack v_h\Rbrack )_{L^2(\Lambda)}
\\
=& 
\frac{1}{2}\sum_{\Lambda\in\mathcal{F}_h^{\text{ad-df},0,-}}(- \mathbf{u}\cdot\mathbf{n} \Lbrack c\Rbrack,\Lbrack v_h\Rbrack )_{L^2(\Lambda)}. 
\end{aligned}
\end{equation*}

Similar to the term $\sum_{\Lambda\in\mathcal{F}_h^0\cup\mathcal{F}_{\Gamma,h}}(\Lbr - (A\nabla c)\cdot\mathbf{n}\Rbr,\Lbrack v_h\Rbrack )_{L^2(\Lambda)}$, we have 
\begin{equation*}
\begin{aligned}
& \sum_{\Lambda\in\mathcal{F}_h^0}(\Lbrack- (A\nabla c)\cdot\mathbf{n}\Rbrack,\Lbr v_h\Rbr)_{L^2(\Lambda)}= \sum_{\Lambda\in\mathcal{F}_h^{\text{df-df},0}}(\Lbrack - (A\nabla c)\cdot\mathbf{n}\Rbrack,\Lbr v_h\Rbr)_{L^2(\Lambda)}
\\
&+\sum_{\Lambda\in\mathcal{F}_h^{\text{ad-df},0,+}}(\Lbrack - (A\nabla c)\cdot\mathbf{n}\Rbrack,\Lbr v_h\Rbr )_{L^2(\Lambda)}+\sum_{\Lambda\in\mathcal{F}_h^{\text{ad-df},0,-}}(\Lbrack - (A\nabla c)\cdot\mathbf{n}\Rbrack,\Lbr v_h\Rbr)_{L^2(\Lambda)}
\\
&=\sum_{\Lambda\in\mathcal{F}_h^{\text{df-df},0}}(- \mathbf{u}\cdot\mathbf{n} \Lbrack c\Rbrack,\Lbr v_h\Rbr)_{L^2(\Lambda)}
+\sum_{\Lambda\in\mathcal{F}_h^{\text{ad-df},0,+}}(- \mathbf{u}\cdot\mathbf{n}\Lbrack c\Rbrack,\Lbr v_h\Rbr )_{L^2(\Lambda)}
\\
&+\sum_{\Lambda\in\mathcal{F}_h^{\text{ad-df},0,-}}(-\mathbf{u}\cdot\mathbf{n} \Lbrack c\Rbrack,\Lbr v_h\Rbr)_{L^2(\Lambda)}
=\sum_{\Lambda\in\mathcal{F}_h^{\text{ad-df},0,-}}(-\mathbf{u}\cdot\mathbf{n} \Lbrack c\Rbrack,\Lbr v_h\Rbr)_{L^2(\Lambda)}. 
\end{aligned}
\end{equation*}
But
\begin{equation*}
\begin{aligned}
& 
\frac{1}{2}\sum_{\Lambda\in\mathcal{F}_h^{\text{ad-df},0,-}}(- \mathbf{u}\cdot\mathbf{n} \Lbrack c\Rbrack,\Lbrack v_h\Rbrack )_{L^2(\Lambda)}
+\sum_{\Lambda\in\mathcal{F}_h^{\text{ad-df},0,-}}(-\mathbf{u}\cdot\mathbf{n} \Lbrack c\Rbrack,\Lbr v_h\Rbr)_{L^2(\Lambda)}
=
\\
&  
\frac{1}{2}\sum_{\substack{\Lambda\in\mathcal{F}_h^{\text{ad-df},0,-}\\ \Lambda\subset\partial_{-}K \\ \Lambda=\partial_{-}K\cap\partial_{+}K_1}}
(- \mathbf{u}\cdot\mathbf{n} \Lbrack c\Rbrack,(v_h|_K-v_h|_{K_1}) )_{L^2(\Lambda)
}
+\frac{1}{2}\sum_{\substack{\Lambda\in\mathcal{F}_h^{\text{ad-df},0,-}\\ \Lambda\subset\partial_{-}K \\ \Lambda=\partial_{-} K\cap \partial_{+} K_1}} 
(-\mathbf{u}\cdot\mathbf{n} \Lbrack c\Rbrack,(v_h|_K+v_h|_{K_1}))_{L^2(\Lambda)
}
\\
& = 
- 
(\mathbf{u}\cdot\mathbf{n} \Lbrack c\Rbrack,v_h)_{L^2(\Sigma_{\text{ad-df},0,-})}, 
\end{aligned}
\end{equation*}
where $\Sigma_{\text{ad-df},0,-}$ denotes the subset of $\Omega$, which is defined by   
$\Sigma_{\text{ad-df},0,-}:=\{\mathbf{x}\in\Omega: \mathbf{x}\in\Lambda \text{ for some } \Lambda\subset \mathcal{F}_h^{\text{ad-df},0,-} 
\}$, 
i.e., $\Sigma_{\text{ad-df},0,-}$ is the set of all points $\mathbf{x}\in\Omega$ belonging to $\mathcal{F}_h^{\text{ad-df},0,-}$. 

\textit{Step 2.} In the left side of \eqref{eq:consistency} (see \eqref{Eq:Bilinear form-1}), with $\Gamma_{\text{df},D,-}$ given by \eqref{eq:inflow df-D} and define the set  
$\Sigma_0:=\{\mathbf{x}\in\Omega: \mathbf{x}\in\Lambda \text{ for some } \Lambda\subset\mathcal{F}_h^0\}$, 
we have 
\begin{equation*}
\begin{aligned}
-\sum\limits_{K\in{\cal T}_h} \langle v_h, {\bf u}\cdot{\bf n} \Lbrack c\Rbrack \rangle_{L^2(\partial_- K)}=&- 
\langle v_h, {\bf u}\cdot{\bf n} c \rangle_{L^2(\Gamma_{\text{ad},-}\cup\Gamma_{\text{df},D,-})} 
-
(\mathbf{u}\cdot\mathbf{n}c,v_h )_{L^2(\Gamma_{\text{df},N,-})}
\\
&- 
\langle v_h, {\bf u}\cdot{\bf n} \Lbrack c\Rbrack \rangle_{L^2(\Sigma_0)}, 
\end{aligned}
\end{equation*}
where  
$c|_{\Gamma_{\text{ad},-}\cup\Gamma_{\text{df},D,-}}=\kappa$.

Because 
\begin{equation*}
\begin{aligned}
&\mathcal{F}_h^0=\mathcal{F}_h^{\text{df-df},0}\cup\mathcal{F}_h^{\text{ad},0,+}\cup\mathcal{F}_h^{\text{ad},0,-}\cup\mathcal{F}_h^{\text{ad-ad},0}, 
\\
& \Lbrack c\Rbrack|_{\mathcal{F}_h^{\text{df-df},0}} =0,
\\
& \mathbf{u}\cdot\mathbf{n}\Lbrack c\Rbrack|_{\mathcal{F}_h^{\text{ad},0,+}\cup \mathcal{F}_h^{\text{ad-ad},0}}=0, 
\end{aligned}
\end{equation*}
we have 
\begin{equation*}
-\langle v_h, {\bf u}\cdot{\bf n} \Lbrack c\Rbrack \rangle_{L^2(\Sigma_0)}=
- (\mathbf{u}\cdot\mathbf{n} \Lbrack c\Rbrack,v_h)_{L^2(\Sigma_{\text{ad-df},0,-})}.  
\end{equation*}
For the three artificial terms, we have  
\begin{equation*}
\begin{aligned}
\sum_{\Lambda\in\mathcal{F}_h^{\text{df-df},0}\cup \mathcal{F}_{\text{df},D,h}} \langle \Lbr (A\nabla v_h)\cdot\mathbf{n}_\Lambda\Rbr, \Lbrack c\Rbrack\rangle_{L^2(\Lambda)}
=&\sum_{\Lambda\in\mathcal{F}_{\text{df},D,h}} \langle (A\nabla v_h)\cdot\mathbf{n}, \kappa\rangle_{L^2(\Lambda)},
\end{aligned}
\end{equation*}
\begin{equation*}
\begin{aligned}
\sum_{\Lambda\in\mathcal{F}_h^{\text{df-df-dfd},0}} |\Lambda|^{-\frac{1}{d-1}}
A_\Lambda \langle\Lbrack c\Rbrack, \Lbrack v_h\Rbrack\rangle_{L^2(\Lambda)}
+\sum_{\Lambda\in\mathcal{F}_{\text{df-dfd},D,h}} |\Lambda|^{-\frac{1}{d-1}}
A_\Lambda \langle c, v_h\rangle_{L^2(\Lambda)}
=&
\\
\sum_{\Lambda\in\mathcal{F}_{\text{df-dfd},D,h}} |\Lambda|^{-\frac{1}{d-1}}
A_\Lambda \langle \kappa, v_h\rangle_{L^2(\Lambda)},& 
\\
S_h(c,v_h)=&T_h(v_h).
\end{aligned}
\end{equation*}
Combining all the above equalities in the two steps, the proof is then complete. 
\end{proof}

\begin{remark}
The assumptions \eqref{eq:H1 continuity}, \eqref{eq:normal H1 continuity-ad-df}, \eqref{eq:normal H1 continuity-ad-ad} show that $c$ is continuous only across the diffusion-diffusion interelement boundaries, while it can be discontinuous elsewhere. Note that \eqref{eq:normal H1 continuity-ad-df} does not necessarily imply the continuity of the solution; if $\mathbf{u}\cdot\mathbf{n}\not=0$ almost everywhere along $\Lambda$, then can we conclude that $c$ is continuous there.  While \eqref{eq:normal H1 continuity-ad-ad} is well-known in the advection-reaction equation (i.e., $A=0$). Thus, these assumptions are realistic. 
\end{remark}

\begin{remark}\label{rem:example}
In \cite{DiPietroDroniouErn2015} and \cite{DiPietroErnGuermond2008}, a case of only an interface across which the nature of the equation changes is considered, where the continuity condition, instead of  \eqref{eq:normal H1 continuity-ad-df},  is given by $\Lbrack c\Rbrack=0$. Such continuity is not correct unless $\mathbf{u}(\mathbf{x})\cdot\mathbf{n}(\mathbf{x})\not=0$ a.e. $\mathbf{x}$ on the interface. In fact, there is an example (\cite[Example 4, page 2159]{HoustonSchwabSuli2002}) for which the solution $\Lbrack c\Rbrack\not=0$ everywhere along the interface but $\mathbf{u}\cdot\mathbf{n}\Lbrack c\Rbrack=0$ everywhere along the interface. Therefore, the correct continuity condition is \eqref{eq:normal H1 continuity-ad-df} here. Such example had  not been covered by any known DG method (including the few references \cite{HoustonSchwabSuli2002}, \cite{ProftRiviere2009}, \cite{DiPietroDroniouErn2015}, \cite{DiPietroErnGuermond2008}).  
\end{remark}

Next, we investigate the SUPG-type stability. 

\begin{lemma}\label{lem:convectionreconstructionproperty}
There are properties of the advection: 
\begin{itemize}
\item {\it The advection coercivity}: for all $v\in \prod_{K\in\mathcal{T}_h} H^1(K)$, 

\begin{equation}\label{Eq:Coercivity of convection}
\begin{aligned}
&\sum\limits_{K\in{\cal T}_h}(\frac{\bf u}{2}\cdot\nabla v+\frac{\iv({\bf u}v)}{2},v)_{L^2(K)}-\sum\limits_{K\in{\cal T}_h} \langle v, {\bf u}\cdot{\bf n} \Lbrack v\Rbrack \rangle_{L^2(\partial_- K)}
\\
&=\dfrac{1}{2}\sum\limits_{\Lambda\in{\cal F}_h^0} \langle |{\bf u}\cdot{\bf n}_\Lambda|,|\Lbrack v\Rbrack|^2\rangle_{L^2(\Lambda)}
+\frac{1}{2} \sum\limits_{\Lambda\in\mathcal{F}_{\Gamma,h}}
\langle |{\bf u}\cdot{\bf n}|,|v|^2\rangle_{L^2(\Lambda)}.
\end{aligned}
\end{equation}

\item {\it The advection dual property}: for all $v,w\in \prod_{K\in\mathcal{T}_h} H^1(K)$,  

\begin{equation}\label{Eq:Convection for interpolaiton error estimates}
\begin{aligned}
&\sum\limits_{K\in{\cal T}_h}(\frac{\bf u}{2}\cdot\nabla v+\frac{\iv({\bf u}v)}{2},w)_{L^2(K)}-\sum\limits_{K\in{\cal T}_h} \langle w, {\bf u}\cdot{\bf n} \Lbrack v\Rbrack \rangle_{L^2(\partial_- K)}
\\
&=-\sum\limits_{K\in{\cal T}_h}(v,\frac{\iv({\bf u}w)}{2}+\frac{\bf u}{2}\cdot\nabla w)_{L^2(K)}
+  \sum\limits_{K\in{\cal T}_h} \langle v, {\bf u}\cdot{\bf n} \Lbrack w\Rbrack \rangle_{L^2(\partial_+ K)}.
\end{aligned}
\end{equation}
\end{itemize} 
\end{lemma}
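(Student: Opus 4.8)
The plan is to prove both identities by the same two-move strategy: reduce each volume term to a boundary term by an element-wise integration by parts, and then collapse the resulting face sums by a case distinction on the sign of $\mathbf{u}\cdot\mathbf{n}_\Lambda$. The observation that makes the bookkeeping tractable is that the product $\mathbf{u}\cdot\mathbf{n}\,\Lbrack v\Rbrack$ is independent of the orientation chosen for $\Lambda$: swapping $K_1\leftrightarrow K_2$ flips both $\mathbf{u}\cdot\mathbf{n}_\Lambda$ and $\Lbrack v\Rbrack$, so on $\Lambda=\partial K_1\cap\partial K_2$ one always has $\mathbf{u}\cdot\mathbf{n}\,\Lbrack v\Rbrack=(\mathbf{u}\cdot\mathbf{n}_\Lambda)(v|_{K_1}-v|_{K_2})$, and in the inflow integrals $\langle\,\cdot\,,\mathbf{u}\cdot\mathbf{n}\Lbrack v\Rbrack\rangle_{\partial_-K}$ the only element-dependent factor is the interior trace $v|_K$, while $\partial_-K$ merely selects which adjacent element is downwind.

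For the coercivity \eqref{Eq:Coercivity of convection}, I would first note the pointwise identity $(\frac{\mathbf{u}}{2}\cdot\nabla v)v+(\frac{\iv(\mathbf{u}v)}{2})v=\frac12\iv(\mathbf{u}v^2)$ on each $K$, obtained by expanding $\iv(\mathbf{u}v^2)=v^2\iv\mathbf{u}+2v\,\mathbf{u}\cdot\nabla v$. Applying the divergence theorem element-wise turns the first volume sum into $\frac12\sum_{K}\langle \mathbf{u}\cdot\mathbf{n},v^2\rangle_{L^2(\partial K)}$. I then split $\sum_K\partial K$ into $\mathcal{F}_h^0$ and $\mathcal{F}_{\Gamma,h}$. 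On an interior face the two incident elements combine, through $v|_{K_1}^2-v|_{K_2}^2=2\Lbrack v\Rbrack\Lbr v\Rbr$, into $(\mathbf{u}\cdot\mathbf{n}_\Lambda)\Lbrack v\Rbrack\Lbr v\Rbr$; adding the single inflow contribution $-(\mathbf{u}\cdot\mathbf{n}_\Lambda)\Lbrack v\Rbrack\,v|_{K_\downarrow}$ from the downwind element $K_\downarrow$ and using $\Lbr v\Rbr-v|_{K_\downarrow}=\pm\frac12\Lbrack v\Rbrack$ collapses the face total to $\frac12|\mathbf{u}\cdot\mathbf{n}_\Lambda|\Lbrack v\Rbrack^2$ in both sign cases. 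On a boundary face $\Lbrack v\Rbrack=v$, and either the inflow term cancels half of the volume term (inflow) or there is no inflow term (outflow), leaving $\frac12|\mathbf{u}\cdot\mathbf{n}|v^2$ in both cases; summing yields the right-hand side.

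For the dual property \eqref{Eq:Convection for interpolaiton error estimates} I would integrate by parts element-wise on each advective piece separately, using $(\frac{\mathbf{u}}{2}\cdot\nabla v,w)_{L^2(K)}=-(v,\frac{\iv(\mathbf{u}w)}{2})_{L^2(K)}+\frac12\langle\mathbf{u}\cdot\mathbf{n},vw\rangle_{L^2(\partial K)}$ and $(\frac{\iv(\mathbf{u}v)}{2},w)_{L^2(K)}=-(v,\frac{\mathbf{u}}{2}\cdot\nabla w)_{L^2(K)}+\frac12\langle\mathbf{u}\cdot\mathbf{n},vw\rangle_{L^2(\partial K)}$. Their sum produces exactly the adjoint volume terms on the right-hand side plus $\sum_K\langle\mathbf{u}\cdot\mathbf{n},vw\rangle_{L^2(\partial K)}$, so the lemma reduces to the purely face-based identity $\sum_K\langle\mathbf{u}\cdot\mathbf{n},vw\rangle_{L^2(\partial K)}-\sum_K\langle w,\mathbf{u}\cdot\mathbf{n}\Lbrack v\Rbrack\rangle_{L^2(\partial_-K)}=\sum_K\langle v,\mathbf{u}\cdot\mathbf{n}\Lbrack w\Rbrack\rangle_{L^2(\partial_+K)}$. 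Verifying this face by face, I again split on the sign of $\mathbf{u}\cdot\mathbf{n}_\Lambda$: for an interior face the three terms telescope (for $\mathbf{u}\cdot\mathbf{n}_\Lambda>0$ both sides reduce to $(\mathbf{u}\cdot\mathbf{n}_\Lambda)\,v|_{K_1}(w|_{K_1}-w|_{K_2})$, and symmetrically for the other sign), while on a boundary face the identity is immediate because $\partial_-K$ and $\partial_+K$ partition $\partial K\cap\Gamma$.

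The main obstacle is the sign and orientation bookkeeping: one must consistently pair the outward normal $\mathbf{n}$ of the current element $K$ with the element-relative trace $v|_K$ in the $\partial_\pm K$ integrals, and reconcile this with the globally oriented $\mathbf{n}_\Lambda$ and $\Lbrack\cdot\Rbrack$, which is exactly what the orientation-independence of $\mathbf{u}\cdot\mathbf{n}\Lbrack v\Rbrack$ and the case split on $\operatorname{sgn}(\mathbf{u}\cdot\mathbf{n}_\Lambda)$ handle. A secondary, more technical point is the justification of the element-wise integration by parts and of the face integrals under the low regularity of $\mathbf{u}$; here one relies on $\mathbf{u}\in H(\iv;\Omega)$ from \eqref{eq:advection Hdiv} together with the piecewise regularity \eqref{eq:piecewise regularity} and the assumption that the mesh respects the partition $\{D_j\}$, so that the normal traces $\mathbf{u}\cdot\mathbf{n}$ and all surface pairings are well defined for $v,w\in\prod_{K\in\mathcal{T}_h}H^1(K)$.
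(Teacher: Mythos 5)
Your proposal is correct and follows exactly the route the paper takes: the paper's own proof is the one-line remark that both identities ``follow from the integration by parts,'' and your element-wise integration by parts plus the face-by-face sign bookkeeping (using the orientation-independence of $\mathbf{u}\cdot\mathbf{n}\Lbrack v\Rbrack$ and the identity $v|_{K_1}^2-v|_{K_2}^2=2\Lbrack v\Rbrack\Lbr v\Rbr$) is precisely the omitted detail, carried out correctly. The only quibble is the loose phrasing that on an inflow boundary face the upwind term ``cancels half of the volume term'' --- it actually reverses its sign --- but the resulting $\tfrac{1}{2}|\mathbf{u}\cdot\mathbf{n}|\,v^2$ you state is right.
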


\begin{proof}
All these properties follow from the integration by parts. 
\end{proof}

From \eqref{Eq:Coercivity of convection} in Lemma \ref{lem:convectionreconstructionproperty}, 
we first define the energy norm,  
from the bilinear form $\mathscr{L}_{\text{\rm dfd},h}^S(\cdot,\cdot)$ which is given by \eqref{Eq:Bilinear form-1},  
\begin{equation}\label{eq:DG norm}
\begin{aligned}
\|v\|_{\mathscr{L}_{\text{\rm dfd},h}^S}^2:=&\mathscr{L}_{\text{\rm dfd},h}^S(v,v)
=\|A^\frac{1}{2}\nabla v\|_{L^2(\mathcal{T}_h)}^2
\\
&+\sum_{
\Lambda\in\mathcal{F}_h^{\text{df-df-dfd},0} 
} |\Lambda|^{-\frac{1}{d-1}} 
A_{\Lambda} \|\Lbrack v\Rbrack\|_{L^2(\Lambda)}^2
+\sum_{
\Lambda\in\mathcal{F}_{\text{df-dfd},D,h} 
} |\Lambda|^{-\frac{1}{d-1}}
A_{\Lambda}\|v\|_{L^2(\Lambda)}^2
\\
&+ \dfrac{1}{2}\sum\limits_{\Lambda\in{\cal F}_h^0} \langle |{\bf u}\cdot{\bf n}_\Lambda|,|\Lbrack v\Rbrack|^2\rangle_{L^2(\Lambda)}
+\frac{1}{2} \sum_{\Lambda\in\mathcal{F}_{\Gamma,h}}\langle |{\bf u}\cdot{\bf n}|,|v|^2\rangle_{L^2(\Lambda)}
\\
&+\|\gamma_0^\frac{1}{2}v\|_{L^2(\mathcal{T}_h)}^2+S_h(v,v).
\end{aligned}
\end{equation}
\begin{lemma}
There exists a positive constant $C_1$ such that when $\alpha$ satisfies  
\begin{equation}\label{eq:alpha}
0<\alpha<\min\Big(\frac{2}{3},C_1\Big),
\end{equation}
then the SUPG-type coercivity holds: 
\begin{equation*}
\begin{aligned}
& \|v_h\|_{\mathscr{L}_{\text{\rm dfd},h}^S}^2\ge \frac{1}{2} \|\tau_K^\frac{1}{2} (\frac{\bf u}{2}\cdot\nabla v_h+\frac{\iv({\bf u}v_h)}{2})\|_{L^2(\mathcal{T}_h)}^2\quad\forall 
v_h\in U_h.
\end{aligned}
\end{equation*}
\end{lemma}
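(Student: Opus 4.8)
The plan is to expand $S_h(v_h,v_h)$ explicitly, show that its leading part reproduces exactly the target quantity, and then absorb every remaining contribution either into that same quantity or into the nonnegative diffusion, reaction, jump and penalty pieces of $\|v_h\|_{\mathscr{L}_{\text{\rm dfd},h}^S}^2$ from \eqref{eq:DG norm}. I abbreviate on each $K$ the advection residual $P:=\frac{\bf u}{2}\cdot\nabla v_h+\frac{\iv({\bf u}v_h)}{2}$, the reaction residual $Q:=\gamma_0 v_h$, and the strong diffusion residual $R:=-\iv(A\nabla v_h)$. Setting $c=v=v_h$ in \eqref{eq:stabilization}, the integrand on $K$ is $(R+P+Q)(P-Q)$, which expands to $P^2-Q^2+R(P-Q)$ because the cross terms $\pm QP$ cancel. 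Hence
\[
S_h(v_h,v_h)=\|\tau_K^{\frac12}P\|_{L^2(\mathcal{T}_h)}^2-\|\tau_K^{\frac12}Q\|_{L^2(\mathcal{T}_h)}^2+\sum_{K\in\mathcal{T}_h}\int_K \tau_K R(P-Q).
\]
Since the jump and penalty terms in \eqref{eq:DG norm} are nonnegative, it suffices to control these three terms against $\|A^{\frac12}\nabla v_h\|_{L^2(\mathcal{T}_h)}^2$, $\|\gamma_0^{\frac12}v_h\|_{L^2(\mathcal{T}_h)}^2$ and $\frac12\|\tau_K^{\frac12}P\|_{L^2(\mathcal{T}_h)}^2$.

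The reaction term is easy: the denominator in \eqref{eq:stabilizing parameter pointwise} dominates $h_K^2|\gamma_0|$, so $\tau_K|\gamma_0|\le\alpha$ pointwise, giving $\|\tau_K^{\frac12}Q\|_{L^2(\mathcal{T}_h)}^2=\sum_K\int_K\tau_K\gamma_0^2 v_h^2\le\alpha\|\gamma_0^{\frac12}v_h\|_{L^2(\mathcal{T}_h)}^2$. Thus this negative contribution is absorbed by the reaction term of the energy norm at the cost of a factor $\alpha$.

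The hard part will be the cross term $\sum_K\int_K\tau_K R(P-Q)$, which I would treat by Young's inequality after first taming $R$ with an inverse estimate. Since $v_h|_K\in P_\ell(K)$, both $\nabla v_h$ and $A\nabla v_h$ lie in a finite-dimensional space, so \eqref{eq:local inverse H1L2} gives $\|\iv(A\nabla v_h)\|_{L^2(K)}\le Ch_K^{-1}\|A\nabla v_h\|_{L^2(K)}\le Ch_K^{-1}A_{\max,K}^{\frac12}\|A^{\frac12}\nabla v_h\|_{L^2(K)}$. Combining this with $\tau_K\le \alpha h_K^2/\varepsilon_{\text{df},K}\le\alpha h_K^2/A_{\max,K}$ (the denominator in \eqref{eq:stabilizing parameter pointwise} being at least $\varepsilon_{\text{df},K}\ge A_{\max,K}$) yields the crucial bound $\int_K\tau_K R^2\le C\alpha\|A^{\frac12}\nabla v_h\|_{L^2(K)}^2$, with $C$ the inverse-inequality constant. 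Then $|\tau_K RP|\le\frac14\tau_K P^2+\tau_K R^2$ and $|\tau_K RQ|\le\frac12\tau_K R^2+\frac12\tau_K Q^2$ bound the whole cross term by $\frac14\|\tau_K^{\frac12}P\|_{L^2(\mathcal{T}_h)}^2+\frac32 C\alpha\|A^{\frac12}\nabla v_h\|_{L^2(\mathcal{T}_h)}^2+\frac12\alpha\|\gamma_0^{\frac12}v_h\|_{L^2(\mathcal{T}_h)}^2$.

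Finally I would collect terms. Dropping the nonnegative jump and penalty pieces and inserting the three estimates gives
\[
\|v_h\|_{\mathscr{L}_{\text{\rm dfd},h}^S}^2\ge\Big(1-\tfrac32 C\alpha\Big)\|A^{\frac12}\nabla v_h\|_{L^2(\mathcal{T}_h)}^2+\Big(1-\tfrac32\alpha\Big)\|\gamma_0^{\frac12}v_h\|_{L^2(\mathcal{T}_h)}^2+\tfrac34\|\tau_K^{\frac12}P\|_{L^2(\mathcal{T}_h)}^2.
\]
Choosing $C_1:=2/(3C)$, the condition $0<\alpha<\min(\frac23,C_1)$ makes both the diffusion coefficient $1-\frac32 C\alpha$ and the reaction coefficient $1-\frac32\alpha$ nonnegative, so the right-hand side is at least $\frac34\|\tau_K^{\frac12}P\|_{L^2(\mathcal{T}_h)}^2\ge\frac12\|\tau_K^{\frac12}P\|_{L^2(\mathcal{T}_h)}^2$, which is the claim. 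Note that the threshold $\frac23$ is exactly the one forced by the reaction term, while $C_1$ is governed by the inverse-inequality constant, in agreement with the remark after \eqref{eq:stabilizing parameter pointwise} that $\alpha$ is determined mainly by the constant in the inverse inequality.
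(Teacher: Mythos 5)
Your proposal is correct and follows essentially the same route as the paper: expand $S_h(v_h,v_h)$ into the pure advection square, the reaction square, and the diffusion-residual cross terms, bound $\tau_K^{1/2}\iv(A\nabla v_h)$ via the local inverse inequality together with $\tau_K\le \alpha h_K^2/\varepsilon_{\text{df},K}\le \alpha h_K^2/A_{\max,K}$, use $\tau_K|\gamma_0|\le\alpha$ for the reaction piece, and absorb everything into the nonnegative diffusion and reaction parts of the energy norm by Young's inequality. The only differences are the particular Young weights (you retain $\tfrac34$ of the advection term versus the paper's $\tfrac12$) and hence the exact value of $C_1$, which is immaterial since the lemma only asserts its existence.
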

\begin{proof}
We need only to estimate $S_h(v_h,v_h)$. From the definition of $S_h(\cdot,\cdot)$ (see \eqref{eq:stabilization}), we have 
\begin{equation*}
\begin{aligned}
S_h(v_h,v_h)=& -\sum_{K\in\mathcal{T}_h}(\iv(A\nabla v_h), \tau_K (\frac{\bf u}{2}\cdot\nabla v_h+\frac{\iv({\bf u}v_h)}{2}))_{L^2(K)}+ \sum_{K\in\mathcal{T}_h}(\iv(A\nabla v_h), \tau_K \gamma_0 v_h)_{L^2(K)}
\\
&+\|\tau_K^\frac{1}{2} (\frac{\bf u}{2}\cdot\nabla v_h+\frac{\iv({\bf u}v_h)}{2})\|_{L^2(\mathcal{T}_h)}^2-\|\tau_K^\frac{1}{2} \gamma_0 v_h\|_{L^2(\mathcal{T}_h)}^2. 
\end{aligned}
\end{equation*}
By the local inverse inequality, letting $C_\text{inv}$ denote the corresponding constant, we have 
\begin{equation*}
h_K \|\iv(A\nabla v_h)\|_{L^2(K)}\le C_\text{inv} \|A\nabla v_h\|_{L^2(K)}\le C_\text{inv} A_{\max,K}^\frac{1}{2}\|A^\frac{1}{2}\nabla v_h\|_{L^2(K)}.  
\end{equation*}
Then, by the definition \eqref{eq:stabilizing parameter pointwise} of $\tau_K$,  
\begin{equation*}
\|\tau_K^\frac{1}{2}\iv(A\nabla v_h)\|_{L^2(K)}\le \big(\alpha\big)^\frac{1}{2} C_\text{inv}\|A^\frac{1}{2}\nabla v_h\|_{L^2(K)}.
\end{equation*}
Hence, by Cauchy-Schwarz inequality and Young's inequality $|a||b|\le \delta |a|^2+\frac{|b|^2}{4\delta}$ with $\delta=\frac{1}{2}$, 
\begin{equation*}
\begin{aligned}
&-\sum_{K\in\mathcal{T}_h}(\iv(A\nabla v_h), \tau_K (\frac{\bf u}{2}\cdot\nabla v_h+\frac{\iv({\bf u}v_h)}{2}))_{L^2(K)}\ge 
\\
&-\sum_{K\in\mathcal{T}_h}\|\tau_K^\frac{1}{2}\iv(A\nabla v_h)\|_{L^2(K)}\|\tau_K^\frac{1}{2} (\frac{\bf u}{2}\cdot\nabla v_h+\frac{\iv({\bf u}v_h)}{2})\|_{L^2(K)}
\\
&\ge -\frac{1}{4\delta}\sum_{K\in\mathcal{T}_h}\|\tau_K^\frac{1}{2}\iv(A\nabla v_h)\|_{L^2(K)}^2-\delta \sum_{K\in\mathcal{T}_h}\|\tau_K^\frac{1}{2} (\frac{\bf u}{2}\cdot\nabla v_h+\frac{\iv({\bf u}v_h)}{2})\|_{L^2(K)}^2
\\
&\ge -\frac{\alpha (C_\text{inv})^2}{4\delta}\sum_{K\in\mathcal{T}_h}\|A^\frac{1}{2}\nabla v_h\|_{L^2(K)}^2-\delta \sum_{K\in\mathcal{T}_h}\|\tau_K^\frac{1}{2} (\frac{\bf u}{2}\cdot\nabla v_h+\frac{\iv({\bf u}v_h)}{2})\|_{L^2(K)}^2
\\
&=-\frac{\alpha (C_\text{inv})^2}{2}
\|A^\frac{1}{2}\nabla v_h\|_{L^2(\mathcal{T}_h)}^2-\frac{1}{2} \sum_{K\in\mathcal{T}_h}\|\tau_K^\frac{1}{2} (\frac{\bf u}{2}\cdot\nabla v_h+\frac{\iv({\bf u}v_h)}{2})\|_{L^2(K)}^2
\end{aligned}
\end{equation*}
and
\begin{equation*}
\begin{aligned}
&\sum_{K\in\mathcal{T}_h}(\iv(A\nabla v_h), \tau_K \gamma_0 v_h)_{L^2(K)}\ge 
 -\frac{\alpha (C_\text{inv})^2}{2} 
 \|A^\frac{1}{2}\nabla v_h\|_{L^2(\mathcal{T}_h)}^2-\frac{1}{2} \|\tau_K^\frac{1}{2}\gamma_0v_h\|_{L^2(\mathcal{T}_h)}^2. 
\end{aligned}
\end{equation*}
In addition, 
\begin{equation}
\|\tau_K^\frac{1}{2}\gamma_0v_h\|_{L^2(K)}^2=\int_K \tau_K (\gamma_0 v_h)^2 \le \alpha \int_K |\gamma_0| v_h^2=\alpha \|\gamma_0^\frac{1}{2} v_h\|_{L^2(K)}^2.  
\end{equation}
Combining all the above gives 
\begin{equation*}
\begin{aligned}
S_h(v_h,v_h)\ge & \frac{1}{2} \|\tau_K^\frac{1}{2} (\frac{\bf u}{2}\cdot\nabla v_h+\frac{\iv({\bf u}v_h)}{2})\|_{L^2(\mathcal{T}_h)}^2
-\alpha (C_\text{inv})^2 
\|A^\frac{1}{2}\nabla v_h\|_{L^2(\mathcal{T}_h)}^2-\frac{3\alpha}{2} \|\gamma_0^\frac{1}{2} v_h\|_{L^2(\mathcal{T}_h)}^2.
\end{aligned}
\end{equation*}
Therefore, with $C_1:=\frac{1}{\big(C_\text{inv}\big)^2}$ and $\alpha$ given by \eqref{eq:alpha}, the conclusion follows. 
\end{proof}

\section{Error estimates}
\label{sec:Error estimates}

Let $c$ denote the exact solution. Let $\pi_hc\in U_h$ be the interpolation, $\pi_h c|_K:=\pi_K c$, where $\pi_K$ satisfies \eqref{Eq:interpolation error-0}, \eqref{Eq:interpolation error-1} in \textit{Assumption 5}. 

We assume that the exact solution $c$ has the following regularity for a real number $\hat{\ell}$ 
\begin{equation}
c|_K\in W^{1+\hat{\ell},4}(K),\quad \frac{1}{4}<\hat{\ell}\le\ell. 
\end{equation}
The condition $\hat{\ell}>\frac{1}{4}$ is invoked from the local trace inequality for dealing with the $\nabla c$ term in the error estimates. 
When $0<\hat{\ell}\le\frac{1}{4}$, the error estimates, associating with the $\nabla c$ term, obtained in what follows 
can still hold,  
with some sophisticated argument like the one in 
\cite{CaiyeZhang2011} applied (such argument is rather involved). 

For the error estimates, we introduce a parameter about the diffusion coefficient $A$, the advection velocity $\mathbf{u}$, the reaction coefficient $\gamma$ and the element diameter $h_K$ as follows:     
\begin{equation}\label{Eq:Stabilizing parameter-1}
\begin{aligned}
\mathcal{D}_K(A,{\bf u},\gamma,h_K):= 
& \varepsilon_{\text{df},K} + 
 h_K^2 |K|^{-1} 
\|{\bf u}\cdot{\bf n}\|_{L^1(\partial K)} +h_K |K|^{-\frac{1}{2}}  
\|{\bf u}\|_{L^2(K)}
\\
&+h_K^2 |K|^{-\frac{1}{2}} \|\iv\mathbf{u}\|_{L^2(K)}+h_K^2 |K|^{-\frac{1}{2}}\|\gamma_0\|_{L^2(K)},
\end{aligned}
\end{equation}
where $\gamma_0=\gamma+\frac{\iv\mathbf{u}}{2}$ and ${\bf u}\in \{{\bf v}\in \prod_{K\in{\cal T}_h} H(\iv;K): {\bf v}\cdot{\bf n}|_{\partial K}\in L^1(\partial K),\forall K\in{\cal T}_h\}$. It is used only for the theoretical purpose and does not enter into the DG method and the implementation.

Let $v_h:=\pi_hc-c_h$. Then 
\begin{equation}\label{eq:error equation-0}
\|v_h\|_{\mathscr{L}_{\text{dfd},h}^S}^2=\mathscr{L}_{\text{\rm dfd},h}^S(v_h,v_h), 
\end{equation}
where, from \eqref{eq:consistency} in Theorem \ref{thm:consistency}, 
\begin{equation}\label{eq:error estimates}
\begin{aligned}
&\mathscr{L}_{\text{dfd},h}^S(v_h,v_h)=\mathscr{L}_{\text{dfd},h}^S(\pi_hc-c_h,v_h)=\mathscr{L}_{\text{dfd},h}^S(\pi_hc-c,v_h), 
\end{aligned}
\end{equation}
where
\begin{equation}\label{eq:interpolation error estimates}
\begin{aligned}
&\mathscr{L}_{\text{dfd},h}^S(\pi_hc-c,v_h)=
\\
&\sum_{K\in\mathcal{T}_h} (A\nabla (\pi_hc- c),\nabla v_h)_{L^2(K)}-\sum_{\Lambda\in\mathcal{F}_h^{\text{df-df},0}\cup \mathcal{F}_{\text{df},D,h}} \langle \Lbr (A\nabla (\pi_h c-c))\cdot\mathbf{n}_\Lambda\Rbr, \Lbrack v_h\Rbrack\rangle_{L^2(\Lambda)} 
\\
&+\sum_{\Lambda\in\mathcal{F}_h^{\text{df-df},0}\cup \mathcal{F}_{\text{df},D,h}} \langle \Lbr (A\nabla v_h)\cdot\mathbf{n}_\Lambda\Rbr, \Lbrack \pi_hc-c\Rbrack\rangle_{L^2(\Lambda)}
\\
&+ \sum_{\Lambda\in\mathcal{F}_h^{\text{df-df-dfd},0}} |\Lambda|^{-\frac{1}{d-1}} 
A_{\Lambda} \langle\Lbrack \pi_hc-c\Rbrack, \Lbrack v_h\Rbrack\rangle_{L^2(\Lambda)}
+ \sum_{\Lambda\in\mathcal{F}_{\text{df-dfd},D,h}} |\Lambda|^{-\frac{1}{d-1}} 
A_{\Lambda} \langle \pi_hc-c, v_h\rangle_{L^2(\Lambda)}
\\
&+\sum\limits_{K\in{\cal T}_h}(\frac{\bf u}{2}\cdot\nabla (\pi_hc-c)+\frac{\iv({\bf u}(\pi_hc-c))}{2},v_h)_{L^2(K)}-\sum\limits_{K\in{\cal T}_h} \langle v_h, {\bf u}\cdot{\bf n} \Lbrack \pi_hc-c\Rbrack \rangle_{L^2(\partial_- K)}
\\
&+(\gamma_0 (\pi_hc-c),v_h)_{L^2({\cal T}_h)}+S_h(\pi_h c-c,v_h).
\end{aligned}
\end{equation}

We estimate the nine terms one-by-one in the right of \eqref{eq:interpolation error estimates}.  

The first term is estimated as follows:  
\begin{equation}\label{eq:first term error}
\begin{aligned}
\sum_{K\in\mathcal{T}_h} (A\nabla (\pi_hc- c),\nabla v_h)_{L^2(K)}\le & C \left(\sum_{K\in\mathcal{T}_h} A_{\max,K} h_K^{2\min(\ell,\hat{\ell})} \|c\|_{H^{1+\hat{\ell}}(K)}^2\right)^\frac{1}{2}
\left(\sum_{K\in\mathcal{T}_h} \|A^\frac{1}{2}\nabla v_h\|_{L^2(K)}^2\right)^\frac{1}{2}.
\end{aligned}
\end{equation}

The second term is estimated as follows: 

\begin{equation}\label{eq:second term error}
\begin{aligned}
&\sum_{\Lambda\in\mathcal{F}_h^{\text{df-df},0}\cup \mathcal{F}_{\text{df},D,h}} \langle \Lbr (A\nabla (\pi_h c-c))\cdot\mathbf{n}_\Lambda\Rbr, \Lbrack v_h\Rbrack\rangle_{L^2(\Lambda)}=
\\
& \sum_{\Lambda\in\mathcal{F}_h^{\text{df-df-dfd},0}\cup \mathcal{F}_{\text{df-dfd},D,h}} \langle \Lbr (A\nabla (\pi_h c-c))\cdot\mathbf{n}_\Lambda\Rbr, \Lbrack v_h\Rbrack\rangle_{L^2(\Lambda)} 
+\sum_{\Lambda\in\mathcal{F}_h^{\text{df-df-add},0}\cup \mathcal{F}_{\text{df-add},D,h}} \langle \Lbr (A\nabla (\pi_h c-c))\cdot\mathbf{n}_\Lambda\Rbr, \Lbrack v_h\Rbrack\rangle_{L^2(\Lambda)}.
\end{aligned}
\end{equation}
In what follows, we estimate the above two subterms of \eqref{eq:second term error}. 

If $\hat{\ell}>\frac{1}{2}$, we estimate the first subterm as follows: by the local trace inequality \eqref{eq:local trace inequality}, 
\begin{equation}\label{eq:second term error-1}
\begin{aligned}
&\sum_{\Lambda\in\mathcal{F}_h^{\text{df-df-dfd},0}\cup \mathcal{F}_{\text{df-dfd},D,h}} \langle \Lbr (A\nabla (\pi_h c-c))\cdot\mathbf{n}_\Lambda\Rbr, \Lbrack v_h\Rbrack\rangle_{L^2(\Lambda)}\le
\\
& \sum_{\Lambda\in\mathcal{F}_h^{\text{df-df-dfd},0}\cup \mathcal{F}_{\text{df-dfd},D,h}} \sum_{i=1}^2 \|A\nabla (\pi_h c-c)|_{K_i}\|_{\Lambda}\|\Lbrack v_h\Rbrack\|_{L^2(\Lambda)}
\\
&\le C \left(\left(\sum_{\substack{\Lambda\in\mathcal{F}_h^{\text{df-df-dfd},0}\\ \Lambda=\partial K_1\cap\partial K_2}} \sum_{i=1}^2|\Lambda|^\frac{1}{d-1} \frac{|\Lambda|}{|K_i|} A_{\max,\Lambda,\partial K_i} (|\pi_hc-c|_{H^1(K_i)}^2+h_K^{2\hat{\ell}}|\pi_hc-c|_{H^{1+\hat{\ell}}(K_i)}^2)\right)^\frac{1}{2}\right.
\\
&+\left. \left(\sum_{\substack{\Lambda\in\mathcal{F}_{\text{df-dfd},D,h}\\ \Lambda=\partial K\cap\Gamma_{\text{df},D}}} |\Lambda|^\frac{1}{d-1} \frac{|\Lambda|}{|K|} A_{\max,\Lambda,\partial K} (|\pi_hc-c|_{H^1(K)}^2+h_K^{2\hat{\ell}}|\pi_hc-c|_{H^{1+\hat{\ell}}(K)}^2)\right)^\frac{1}{2}\right)
\\
&\times \left(\sum_{\Lambda\in\mathcal{F}_h^{\text{df-df-dfd},0}\cup \mathcal{F}_{\text{df-dfd},D,h}} |\Lambda|^{-\frac{1}{d-1}} A_\Lambda \|\Lbrack v_h\Rbrack\|_{L^2(\Lambda)}^2\right)^\frac{1}{2}
\\
&\le C  \left(\sum_{\substack{\Lambda\in\mathcal{F}_h^{\text{df-df-dfd},0}\\ \Lambda=\partial K_1\cap\partial K_2}} \sum_{i=1}^2 A_{\max,\Lambda,\partial K_i} h_{K_i}^{2\min(\ell,\hat{\ell})} \|c\|_{H^{1+\hat{\ell}}(K_i)}^2\right.
\\
&\left.+\sum_{\Lambda\in\mathcal{F}_{\text{df-dfd},D,h}}A_{\max,\Lambda,\partial K} h_{K}^{2\min(\ell,\hat{\ell})} \|c\|_{H^{1+\hat{\ell}}(K)}^2\right)^\frac{1}{2}
\\
&\times \left(\sum_{\Lambda\in\mathcal{F}_h^{\text{df-df-dfd},0}\cup \mathcal{F}_{\text{df-dfd},D,h}} |\Lambda|^{-\frac{1}{d-1}} A_\Lambda \|\Lbrack v_h\Rbrack\|_{L^2(\Lambda)}^2\right)^\frac{1}{2},
\end{aligned}
\end{equation}
where we have used the following facts: the interpolation property \eqref{Eq:interpolation error-0} in \textit{Assumption A5} and the shape-regularity \eqref{eq:shape-regularity} in \textit{Assumption 1}  
\begin{equation}\label{eq:shape-regularity-Lambda}
|\Lambda|^\frac{1}{d-1} \frac{|\Lambda|}{|K|}\le C \frac{|\partial K| h_\Lambda}{|K|}\le C \frac{|\partial K| h_K}{|K|}
\le C. 
\end{equation}
Regarding the second subterm, noticing the diffusion-diffusion with advection-dominated element boundaries in \eqref{eq:advection-dominated df-df-0} and \eqref{eq:advection-dominated df-df-1}, following the argument for proving the first subterm,  we have
\begin{equation}\label{eq:second term error-2}
\begin{aligned}
& \sum_{\Lambda\in\mathcal{F}_h^{\text{df-df-add},0}\cup \mathcal{F}_{\text{df-add},D,h}} \langle \Lbr (A\nabla (\pi_h c-c))\cdot\mathbf{n}_\Lambda\Rbr, \Lbrack v_h\Rbrack\rangle_{L^2(\Lambda)}\le
\\
&\le C  \left(\sum_{\substack{\Lambda\in\mathcal{F}_h^{\text{df-df-add},0}\\ \Lambda=\partial K_1\cap\partial K_2}} \sum_{i=1}^2 A_{\max,\Lambda,\partial K_i} h_{K_i}^{2\min(\ell,\hat{\ell})} \|c\|_{H^{1+\hat{\ell}}(K_i)}^2\right.
\\\
&\left.+\sum_{\Lambda\in\mathcal{F}_{\text{df-add},D,h}}A_{\max,\Lambda,\partial K} h_{K}^{2\min(\ell,\hat{\ell})} \|c\|_{H^{1+\hat{\ell}}(K)}^2\right)^\frac{1}{2}
\\
\\
&\times \left(\dfrac{1}{2}\sum\limits_{\Lambda\in{\cal F}_h^0} \langle |{\bf u}\cdot{\bf n}_\Lambda|,|\Lbrack v_h\Rbrack|^2\rangle_{L^2(\Lambda)}
+\frac{1}{2} \sum_{\Lambda\in\mathcal{F}_{\Gamma,h}}\langle |{\bf u}\cdot{\bf n}|,|v_h|^2\rangle_{L^2(\Lambda)}\right)^\frac{1}{2}.
\end{aligned}
\end{equation}

If $\frac{1}{4}<\hat{\ell}\le\frac{1}{2}$, we use instead the local trace inequality \eqref{eq:local trace ienquality wsp}, similarly as \eqref{eq:second term error-1} and \eqref{eq:second term error-2}, to obtain the estimates of the two subterms of 
\eqref{eq:second term error}, respectively, as follows:  
\begin{equation}\label{eq:second term error-11}
\begin{aligned}
&\sum_{\Lambda\in\mathcal{F}_h^{\text{df-df-dfd},0}\cup \mathcal{F}_{\text{df-dfd},D,h}} \langle \Lbr (A\nabla (\pi_h c-c))\cdot\mathbf{n}_\Lambda\Rbr, \Lbrack v_h\Rbrack\rangle_{L^2(\Lambda)}\le
\\
&\le C  \left(\sum_{\substack{\Lambda\in\mathcal{F}_h^{\text{df-df-dfd},0}\\ \Lambda=\partial K_1\cap\partial K_2}} \sum_{i=1}^2 A_{\max,\Lambda,\partial K_i} h_{K_i}^{2\min(\ell,\hat{\ell})} (\|c\|_{H^{1+\hat{\ell}}(K_i)}^2+|K_i|^\frac{1}{2}\|c\|_{W^{1+\hat{\ell},4}(K_i)}^2)\right.
\\
&\left.+\sum_{\Lambda\in\mathcal{F}_{\text{df-dfd},D,h}}A_{\max,\Lambda,\partial K} h_{K}^{2\min(\ell,\hat{\ell})} (\|c\|_{H^{1+\hat{\ell}}(K)}^2+|K|^\frac{1}{2}\|c\|_{W^{1+\hat{\ell},4}(K)}^2)\right)^\frac{1}{2}
\\
&\times \left(\sum_{\Lambda\in\mathcal{F}_h^{\text{df-df-dfd},0}\cup \mathcal{F}_{\text{df-dfd},D,h}} |\Lambda|^{-\frac{1}{d-1}} A_\Lambda \|\Lbrack v_h\Rbrack\|_{L^2(\Lambda)}^2\right)^\frac{1}{2},
\end{aligned}
\end{equation}

\begin{equation}\label{eq:second term error-22}
\begin{aligned}
& \sum_{\Lambda\in\mathcal{F}_h^{\text{df-df-add},0}\cup \mathcal{F}_{\text{df-add},D,h}} \langle \Lbr (A\nabla (\pi_h c-c))\cdot\mathbf{n}_\Lambda\Rbr, \Lbrack v_h\Rbrack\rangle_{L^2(\Lambda)}\le
\\
&\le C  \left(\sum_{\substack{\Lambda\in\mathcal{F}_h^{\text{df-df-add},0}\\ \Lambda=\partial K_1\cap\partial K_2}} \sum_{i=1}^2 A_{\max,\Lambda,\partial K_i} h_{K_i}^{2\min(\ell,\hat{\ell})} (\|c\|_{H^{1+\hat{\ell}}(K_i)}^2+|K_i|^\frac{1}{2}\|c\|_{W^{1+\hat{\ell},4}(K_i)}^2)\right.
\\
&\left. +\sum_{\Lambda\in\mathcal{F}_{\text{df-add},D,h}}A_{\max,\Lambda,\partial K} h_{K}^{2\min(\ell,\hat{\ell})} (\|c\|_{H^{1+\hat{\ell}}(K)}^2+|K|^\frac{1}{2}\|c\|_{W^{1+\hat{\ell},4}(K)}^2)\right)^\frac{1}{2}
\\
&\times \left(\dfrac{1}{2}\sum\limits_{\Lambda\in{\cal F}_h^0} \langle |{\bf u}\cdot{\bf n}_\Lambda|,|\Lbrack v_h\Rbrack|^2\rangle_{L^2(\Lambda)}
+\frac{1}{2} \sum_{\Lambda\in\mathcal{F}_{\Gamma,h}}\langle |{\bf u}\cdot{\bf n}|,|v_h|^2\rangle_{L^2(\Lambda)}\right)^\frac{1}{2}.
\end{aligned}
\end{equation}

The third term is estimated as follows: 

\begin{equation*}
\begin{aligned}
& \sum_{\Lambda\in\mathcal{F}_h^{\text{df-df},0}\cup \mathcal{F}_{\text{df},D,h}} \langle \Lbr (A\nabla v_h)\cdot\mathbf{n}_\Lambda\Rbr, \Lbrack \pi_hc-c\Rbrack\rangle_{L^2(\Lambda)}\le
\\
& C \sum_{\substack{\Lambda\in\mathcal{F}_h^{\text{df-df},0}\\ \Lambda=\partial K_1\cap\partial K_2}} \sum_{i=1}^2 \|(A^\frac{1}{2}\nabla v_h)|_{K_i}\|_{L^2(\Lambda)} \sum_{i=1}^2 A_\Lambda^\frac{1}{2}\|(\pi_hc-c)|_{K_i}\|_{L^2(\Lambda)}
\\
&+C \sum_{\substack{\Lambda\in\mathcal{F}_{\text{df},D,h}\\ \Lambda=\partial K\cap \Gamma_{\text{df},D}}} \|(A^\frac{1}{2}\nabla v_h)|_{K}\|_{L^2(\Lambda)}  A_\Lambda^\frac{1}{2}\|(\pi_hc-c)|_{K}\|_{L^2(\Lambda)}
\\
&=C \sum_{\substack{\Lambda\in\mathcal{F}_h^{\text{df-df},0}\\ \Lambda=\partial K_1\cap\partial K_2}} |\Lambda|^\frac{1}{2(d-1)}\sum_{i=1}^2 \|(A^\frac{1}{2}\nabla v_h)|_{K_i}\|_{L^2(\Lambda)} |\Lambda|^{-\frac{1}{2(d-1)}}A_\Lambda^\frac{1}{2}\sum_{i=1}^2 \|(\pi_hc-c)|_{K_i}\|_{L^2(\Lambda)}
\\
&+ C \sum_{\substack{\Lambda\in\mathcal{F}_{\text{df},D,h}\\ \Lambda=\partial K\cap \Gamma_{\text{df},D}}} |\Lambda|^\frac{1}{2(d-1)}\|(A^\frac{1}{2}\nabla v_h)|_{K}\|_{L^2(\Lambda)} |\Lambda|^{-\frac{1}{2(d-1)}}A_\Lambda^\frac{1}{2} \|(\pi_hc-c)|_{K}\|_{L^2(\Lambda)}, 
\end{aligned}
\end{equation*}
where, by the local trace inequality \eqref{eq:local trace inequality} on the finite-dimensional space and the bound \eqref{eq:shape-regularity-Lambda}, 
\begin{equation*}
\begin{aligned}
&|\Lambda|^\frac{1}{d-1}\sum_{i=1}^2 \|(A^\frac{1}{2}\nabla v_h)|_{K_i}\|_{L^2(\Lambda)}^2=
\\
&|\Lambda|^\frac{1}{d-1} \frac{|\Lambda|}{|K_1|} \frac{|K_1|}{|\Lambda|}\|(A^\frac{1}{2}\nabla v_h)|_{K_1}\|_{L^2(\Lambda)}^2+|\Lambda|^\frac{1}{d-1} \frac{|\Lambda|}{|K_2|} \frac{|K_2|}{|\Lambda|}\|(A^\frac{1}{2}\nabla v_h)|_{K_2}\|_{L^2(\Lambda)}^2 
\\
&\le C \sum_{i=1}^2\|A^\frac{1}{2}\nabla v_h\|_{L^2(K_i)}^2 
\end{aligned}
\end{equation*}
and
\begin{equation*}
|\Lambda|^\frac{1}{d-1} \|(A^\frac{1}{2}\nabla v_h)|_{K}\|_{L^2(\Lambda)}^2\le C \|A^\frac{1}{2}\nabla v_h\|_{L^2(K)}^2, 
\end{equation*}
while, by \eqref{eq:nonconformity}  and \eqref{Eq:interpolation error-0}, 
\begin{equation}\label{eq:third term error-1}
\begin{aligned}
& |\Lambda|^{-\frac{1}{d-1}}A_\Lambda\sum_{i=1}^2 \|(\pi_hc-c)|_{K_i}\|_{L^2(\Lambda)}^2= 
\\
& |\Lambda|^{-\frac{1}{d-1}}A_\Lambda \frac{|\Lambda|}{|K_1|} \frac{|K_1|}{|\Lambda|}\|(\pi_hc-c)|_{K_1}\|_{L^2(\Lambda)}^2+|\Lambda|^{-\frac{1}{d-1}}A_\Lambda \frac{|\Lambda|}{|K_2|} \frac{|K_2|}{|\Lambda|}\|(\pi_hc-c)|_{K_2}\|_{L^2(\Lambda)}^2
\\
& \le C \sum_{i=1}^2 A_\Lambda h_{K_i}^{2\min(\ell,\hat{\ell})} \|c\|_{H^{1+\hat{\ell}}(K_i)}^2
\end{aligned}
\end{equation}
and
\begin{equation}\label{eq:third term error-2}
|\Lambda|^{-\frac{1}{d-1}}A_\Lambda\|(\pi_hc-c)|_{K}\|_{L^2(\Lambda)}^2\le C A_\Lambda h_{K}^{2\min(\ell,\hat{\ell})} \|c\|_{H^{1+\hat{\ell}}(K)}^2,
\end{equation}
and consequently, the error estimates of the third term are as follows: 
\begin{equation}\label{eq:third term error}
\begin{aligned}
& \sum_{\Lambda\in\mathcal{F}_h^{\text{df-df},0}\cup \mathcal{F}_{\text{df},D,h}} \langle \Lbr (A\nabla v_h)\cdot\mathbf{n}_\Lambda\Rbr, \Lbrack \pi_hc-c\Rbrack\rangle_{L^2(\Lambda)}\le
\\
&  C \left(\sum_{\substack{\Lambda\in\mathcal{F}_h^{\text{df-df},0}\\ \Lambda=\partial K_1\cap\partial K_2}} \sum_{i=1}^2 A_\Lambda h_{K_i}^{2\min(\ell,\hat{\ell})} \|c\|_{H^{1+\hat{\ell}}(K_i)}^2+\sum_{\substack{\Lambda\in\mathcal{F}_{\text{df},D,h}\\ \Lambda=\partial K\cap \Gamma_{\text{df},D}}} A_\Lambda h_{K}^{2\min(\ell,\hat{\ell})} \|c\|_{H^{1+\hat{\ell}}(K)}^2\right)^\frac{1}{2}
\\
&\times \left(\sum_{K\in\mathcal{T}_h} \|A^\frac{1}{2}\nabla v_h\|_{L^2(K)}^2\right)^\frac{1}{2}.
\end{aligned}
\end{equation}

The fourth and fifth terms can be estimated the same as \eqref{eq:third term error-1} and \eqref{eq:third term error-2}, and we have 
\begin{equation}\label{eq:four-five error}
\begin{aligned}
&\sum_{\Lambda\in\mathcal{F}_h^{\text{df-df-dfd},0}} |\Lambda|^{-\frac{1}{d-1}}A_{\Lambda} \langle\Lbrack \pi_hc-c\Rbrack, \Lbrack v_h\Rbrack\rangle_{L^2(\Lambda)}+ \sum_{\Lambda\in\mathcal{F}_{\text{df-dfd},D,h}} |\Lambda|^{-\frac{1}{d-1}} 
A_{\Lambda} \langle \pi_hc-c, v_h\rangle_{L^2(\Lambda)}\le
\\
& C \left(\sum_{\substack{\Lambda\in\mathcal{F}_h^{\text{df-df-dfd},0}\\ \Lambda=\partial K_1\cap\partial K_2}} \sum_{i=1}^2 A_\Lambda h_{K_i}^{2\min(\ell,\hat{\ell})} \|c\|_{H^{1+\hat{\ell}}(K_i)}^2+\sum_{\substack{\Lambda\in\mathcal{F}_{\text{df-dfd},D,h}\\ \Lambda=\partial K\cap \Gamma_{\text{df},D}}} A_\Lambda h_{K}^{2\min(\ell,\hat{\ell})} \|c\|_{H^{1+\hat{\ell}}(K)}^2\right)^\frac{1}{2}
\\
&\times \left( \sum_{\Lambda\in\mathcal{F}_h^{\text{df-df-dfd},0}} |\Lambda|^{-\frac{1}{d-1}}A_{\Lambda} \|\Lbrack v_h\Rbrack\|_{L^2(\Lambda)}^2+ \sum_{\Lambda\in\mathcal{F}_{\text{df-dfd},D,h}} |\Lambda|^{-\frac{1}{d-1}} 
A_{\Lambda} \|v_h\|_{L^2(\Lambda)}^2\right)^\frac{1}{2}.
\end{aligned}
\end{equation}

The sixth and seventh terms must be estimated altogether. By the advection dual property \eqref{Eq:Convection for interpolaiton error estimates} in Lemma \ref{lem:convectionreconstructionproperty}, we have

\begin{equation*}
\begin{array}{l}
\sum\limits_{K\in{\cal T}_h}(\frac{\bf u}{2}\cdot\nabla (\pi_hc-c)+\frac{\iv({\bf u}(\pi_hc-c))}{2},v_h)_{L^2(K)}-\sum\limits_{K\in{\cal T}_h} \langle v_h, {\bf u}\cdot{\bf n} \Lbrack \pi_hc-c\Rbrack \rangle_{L^2(\partial_- K)}
\\
=-\sum\limits_{K\in{\cal T}_h}(\pi_h c-c,\frac{\iv({\bf u}v_h)}{2}+\frac{\bf u}{2}\cdot\nabla v_h)_{L^2(K)}
+  \sum\limits_{K\in{\cal T}_h} \langle \pi_h c-c, {\bf u}\cdot{\bf n} \Lbrack v_h\Rbrack \rangle_{L^2(\partial_+ K)}.
\end{array}
\end{equation*}
We estimate the above two right subterms one-by-one. The first subterm is estimated as follows: 
\begin{equation*}
\begin{aligned}
& (c-\pi_hc,\frac{\iv({\bf u}v_h)}{2}+\frac{\bf u}{2}\cdot\nabla v_h)_{L^2(K)}=(\tau_K^{-\frac{1}{2}}(c-\pi_hc),\tau_K^\frac{1}{2} (\frac{\iv({\bf u}v_h)}{2}+\frac{\bf u}{2}\cdot\nabla v_h))_{L^2(K)},
\end{aligned}
\end{equation*}
where, by the definition of $\tau_K$ in \eqref{eq:stabilizing parameter pointwise} and the definition of $\mathcal{D}_K(A,\mathbf{u},\gamma,h_K)$ in  \eqref{Eq:Stabilizing parameter-1}, and by the interpolation error estimates \eqref{Eq:interpolation error-1} in \textit{Assumption 5}, 
\begin{equation*}
\begin{aligned}
&\|\tau_K^{-\frac{1}{2}}(c-\pi_h c)\|_{L^2(K)}^2=\int_K \tau_K^{-1} (c-\pi_hc)^2
\\
&=\int_K(\varepsilon_{\text{df},K}+h_K |\mathbf{u}|_2+h_K^2 |\gamma_0|)h_K^{-2} (c-\pi_hc)^2
\\
&\le C \left( \varepsilon_{\text{df},K}^2 |K|+h_K^2 \|\mathbf{u}\|_{L^2(K)}^2+h_K^4 \|\gamma_0\|_{L^2(K)}^2\right)^\frac{1}{2} \left(h_K^{-4} \int_K (c-\pi_hc)^4\right)^\frac{1}{2} 
\\
&= C \left( \varepsilon_{\text{df},K}^2 |K|+h_K^2 \|\mathbf{u}\|_{L^2(K)}^2+h_K^4 \|\gamma_0\|_{L^2(K)}^2\right)^\frac{1}{2} \left(h_K^{-2} \|c-\pi_hc\|_{L^4(K)}^2\right)
\\
&\le C h_K^{2\min(\ell,\hat{\ell})}  \left( \varepsilon_{\text{df},K} |K|^\frac{1}{2}+h_K \|\mathbf{u}\|_{L^2(K)}+h_K^2 \|\gamma_0\|_{L^2(K)}\right)\|c\|_{W^{1+\hat{\ell},4}(K)}^2
\\
&=C h_K^{2\min(\ell,\hat{\ell})}|K|^\frac{1}{2} \mathcal{D}_K(A,\mathbf{u},\gamma,h_K) \|c\|_{W^{1+\hat{\ell},4}(K)}^2.
\end{aligned}
\end{equation*}
The second subterm is estimated as follows: 
\begin{equation*}
\begin{aligned}
&\langle c-\pi_hc, {\bf u}\cdot{\bf n} \Lbrack v_h\Rbrack \rangle_{L^2(\partial_+ K)}\le \langle |c-\pi_hc|, |{\bf u}\cdot{\bf n}| |\Lbrack v_h\Rbrack| \rangle_{L^2(\partial K)}
\\
&\le \sum\limits_{\Lambda\subset\partial K} \|(c-\pi_hc)|{\bf u}\cdot{\bf n}|^\frac{1}{2}\|_{L^2(\Lambda)}\, \| |{\bf u}\cdot{\bf n}|^\frac{1}{2}\Lbrack v_h\Rbrack\|_{L^2(\Lambda)}
\\
&\le \left(\sum\limits_{\Lambda\subset\partial K} \|(c-\pi_hc)|{\bf u}\cdot{\bf n}|^\frac{1}{2}\|_{L^2(\Lambda)}^2\right)^\frac{1}{2} 
\left(\sum\limits_{\Lambda\subset\partial K}\|\,|{\bf u}\cdot{\bf n}|^\frac{1}{2}\Lbrack v_h\Rbrack\|_{L^2(\Lambda)}^2\right)^\frac{1}{2},
\end{aligned}
\end{equation*}
where, noticing that the Sobolev imbedding $W^{1,4}\hookrightarrow L^\infty$,  by the interpolation error estimates \eqref{Eq:interpolation error-1}, 
\begin{equation*}
\begin{aligned}
\|(c-\pi_hc)|{\bf u}\cdot{\bf n}|^\frac{1}{2}\|_{L^2(\Lambda)}^2=&\int_\Lambda |{\bf u}\cdot{\bf n}|\,(c-\pi_h c)^2
\\
\le & \|{\bf u}\cdot{\bf n}\|_{L^1(\Lambda)}\|c-\pi_hc\|_{L^\infty(\Lambda)}^2
\\
\le & \|{\bf u}\cdot{\bf n}\|_{L^1(\Lambda)}\|c-\pi_hc\|_{L^\infty(K)}^2
\\
\le & C |K|^{-\frac{1}{2}} h_K^{2(1+\min(\ell,\hat{\ell}))} \|c\|_{W^{1+\hat{\ell},4}(K)}^2 \|{\bf u}\cdot{\bf n}\|_{L^1(\Lambda)},  
\end{aligned}
\end{equation*}
and as a result, 
\begin{equation*}
\begin{aligned}
\left(\sum\limits_{\Lambda\subset\partial K} \|(c-\pi_hc)|{\bf u}\cdot{\bf n}|^\frac{1}{2}\|_{L^2(\Lambda)}^2\right)^\frac{1}{2}\le & C \left(\sum\limits_{\Lambda\subset\partial K} |K|^{-\frac{1}{2}} h_K^{2(1+\min(\ell,\hat{\ell})} \|c\|_{W^{1+\hat{\ell},4}(K)}^2 \|{\bf u}\cdot{\bf n}\|_{L^1(\Lambda)}\right)^\frac{1}{2}
\\
=& C |K|^\frac{1}{4} h_K^{\min(\ell,\hat{\ell})} \|c\|_{W^{1+\hat{\ell},4}(K)} |K|^{-\frac{1}{2}} h_K \|\mathbf{u}\cdot\mathbf{n}\|_{L^1(\partial K)}^\frac{1}{2} 
\\
\le & C \Big(\mathcal{D}_K(A,\mathbf{u},\gamma,h_K)\Big)^\frac{1}{2} h_K^{\min(\ell,\hat{\ell})} |K|^\frac{1}{4} \|c\|_{W^{1+\hat{\ell},4}(K)}. 
\end{aligned}
\end{equation*}
Hence, on each $K$, 
\begin{equation*}
\begin{aligned}
\langle c-\pi_hc, {\bf u}\cdot{\bf n} \Lbrack v_h\Rbrack \rangle_{\partial_+ K}\le  & C
h_K^{\min(\ell,\hat{\ell})}|K|^\frac{1}{4} \Big({\cal D}_K(A,{\bf u},\gamma,h_K)\Big)^\frac{1}{2} \|c\|_{W^{1+\hat{\ell},4}(K)}
\\
& \times \left(\sum\limits_{\Lambda\subset\partial K}\|\,|{\bf u}\cdot{\bf n}|^{1/2}\Lbrack v_h\Rbrack\|_{L^2(\Lambda)}^2\right)^\frac{1}{2}.
\end{aligned}
\end{equation*}\
Summing over all $K\in\mathcal{T}_h$ gives the estimates of the sixth and seventh terms as follows:  
\begin{equation}\label{eq:six-seven error}
\begin{aligned}
&\sum\limits_{K\in{\cal T}_h}(\frac{\bf u}{2}\cdot\nabla (\pi_hc-c)+\frac{\iv({\bf u}(\pi_hc-c))}{2},v_h)_{L^2(K)}-\sum\limits_{K\in{\cal T}_h} \langle v_h, {\bf u}\cdot{\bf n} \Lbrack \pi_hc-c\Rbrack \rangle_{L^2(\partial_- K)}\le
\\
& C \left(\sum_{K\in\mathcal{T}_h} h_K^{2 \min(\ell,\hat{\ell})} |K|^\frac{1}{2} {\cal D}_K(A,{\bf u},\gamma,h_K)\|c\|_{W^{1+\hat{\ell},4}(K)}^2\right)^\frac{1}{2}
\\
&\times \left(\dfrac{1}{2}\sum\limits_{\Lambda\in{\cal F}_h^0} \langle |{\bf u}\cdot{\bf n}_\Lambda|,|\Lbrack v_h\Rbrack|^2\rangle_{L^2(\Lambda)}
+\frac{1}{2} \sum_{\Lambda\in\mathcal{F}_{\Gamma,h}}\langle |{\bf u}\cdot{\bf n}|,|v_h|^2\rangle_{L^2(\Lambda)}\right)^\frac{1}{2}.
\end{aligned}
\end{equation}

The eighth term of the reaction term is estimated as follows:  
\begin{equation*}
(\gamma_0 (c-\pi_h c),v_h)_{L^2(K)}\le \|\gamma_0^\frac{1}{2}(c-\pi_h c)\|_{L^2(K)}\,\|\gamma_0^\frac{1}{2}v_h\|_{L^2(K)},
\end{equation*}
\begin{equation*}
\begin{aligned}
\|\gamma_0^\frac{1}{2}(c-\pi_h c)\|_{L^2(K)}^2=&\int_K |\gamma_0| (c-\pi_h c)^2
\\
\le & \|\gamma_0\|_{L^2(K)} \|c-\pi_h c\|_{L^4(K)}^2
\\
\le & C h_K^{2+2\min(\ell,\hat{\ell})} \|c\|_{W^{1+\hat{\ell},4}(K)}^2 \|\gamma_0\|_{L^2(K)},
\end{aligned}
\end{equation*}
and hence
\begin{equation*}
\begin{aligned}
(\gamma_0 (c-\pi_h c),v_h)_{L^2(K)} &\le C
h_K^{\min(\ell,\hat{\ell})}|K|^\frac{1}{4} \Big({\cal D}_K(A,{\bf u},\gamma,h_K)\Big)^{1/2} \|c\|_{W^{1+\hat{\ell},4}(K)}\,\|\gamma_0^\frac{1}{2}v_h\|_{L^2(K)}.
\end{aligned}
\end{equation*}
Summing over $\mathcal{T}_h$ gives the estimates of the eighth term as follows: 
\begin{equation}\label{eq:eight error}
(\gamma_0 (c-\pi_h c),v_h)_{L^2(\mathcal{T}_h)} \le C \left(\sum_{K\in\mathcal{T}_h} h_K^{2 \min(\ell,\hat{\ell})} |K|^\frac{1}{2} {\cal D}_K(A,{\bf u},\gamma,h_K)\|c\|_{W^{1+\hat{\ell},4}(K)}^2\right)^\frac{1}{2}
\|\gamma_0^\frac{1}{2}v_h\|_{L^2(\mathcal{T}_h)}.
\end{equation} 

The last stabilization term is estimated as follows: 
\begin{equation*}
S_h(c-\pi_h c,v_h)\le \Big(S_h(c-\pi_hc,c-\pi_hc)\Big)^\frac{1}{2} \Big(S_h(v_h,v_h)\Big)^\frac{1}{2}.
\end{equation*}
In $\Big(S_h(c-\pi_hc,c-\pi_hc)\Big)^\frac{1}{2}$, on each $K\in\mathcal{T}_h$ (see \eqref{eq:stabilization}), we only estimate 
\begin{equation}\label{eq:stabilization error-0}
\|\tau_K^\frac{1}{2} (\frac{\iv({\bf u}(c-\pi_h c))}{2}+\frac{\bf u}{2}\cdot\nabla (c-\pi_hc))\|_{L^2(K)}
\le  \|\tau_K^\frac{1}{2}(c-\pi_hc)\frac{\iv{\bf u}}{2}\|_{L^2(K)}+\|\tau_K^\frac{1}{2}{\bf u}\cdot\nabla (c-\pi_hc)\|_{L^2(K)},
\end{equation}
while the term 
\[
\|\tau_K^\frac{1}{2}\iv(A\nabla (c-\pi_hc)\|_{L^2(K)}
\]
can be estimated easily, giving 
\begin{equation}\label{eq:stabilization error-1}
\|\tau_K^\frac{1}{2}\iv(A\nabla (c-\pi_hc)\|_{L^2(K)}\le C h_K^{\min(\ell,\hat{\ell})} (A_{\max, K}^{-\frac{1}{2}}\|A\nabla c\|_{H^{\hat{\ell}}(K)}+A_{\max,K}^\frac{1}{2}\|c\|_{H^{1+\hat{\ell}}(K)})
\end{equation} 
and the term 
\[
\|\tau_K^\frac{1}{2}\gamma_0(c-\pi_hc)\|_{L^2(K)}
\]
can be similarly estimated as the previous reaction term (see \eqref{eq:eight error}), giving    
\begin{equation}\label{eq:stabilization error-2}
\begin{aligned}
\|\tau_K^\frac{1}{2}\gamma_0(c-\pi_hc)\|_{L^2(K)}\le & C \|\gamma_0^\frac{1}{2}(c-\pi_hc)\|_{L^2(K)}
\\
\le & C h_K^{\min(\ell,\hat{\ell})}|K|^\frac{1}{4} \Big({\cal D}_K(A,{\bf u},\gamma,h_K)\Big)^{1/2} \|c\|_{W^{1+\hat{\ell},4}(K)}. 
\end{aligned}
\end{equation}
By 
the definition of $\tau_K$ in \eqref{eq:stabilizing parameter pointwise} and the definition of $\mathcal{D}_K(A,\mathbf{u},\gamma,h_K)$ in \eqref{Eq:Stabilizing parameter-1}, 
for the first subterm in the right of \eqref{eq:stabilization error-0}, noticing that    
\begin{equation*}
\begin{aligned}
 \|\tau_K^\frac{1}{2}(c-\pi_hc)\frac{\iv{\bf u}}{2}\|_{L^2(K)} &\le C \||\iv{\bf u}|^\frac{1}{2}(c-\pi_h c)\|_{L^2(K)}, 
 \end{aligned}
 \end{equation*}
 similar as \eqref{eq:stabilization error-2}, we have 
\begin{equation*}
\|\tau_K^\frac{1}{2}(c-\pi_hc)\frac{\iv{\bf u}}{2}\|_{L^2(K)}\le Ch_K^{\min(\ell,\hat{\ell})}|K|^\frac{1}{4} \Big({\cal D}_K(A,{\bf u},\gamma,h_K)\Big)^{1/2} \|c\|_{W^{1+\hat{\ell},4}(K)}. 
\end{equation*}
Estimating the second subterm in the right of  \eqref{eq:stabilization error-0},  we have  
\begin{equation*}
\begin{aligned}
\|\tau_K^\frac{1}{2}{\bf u}\cdot\nabla (c-\pi_hc)\|_{L^2(K)} &\le h_K^\frac{1}{2}\| |{\bf u}|^\frac{1}{2}\nabla(c-\pi_hc)\|_{L^2(K)}
\\
&\le C h_K^\frac{1}{2}\|\mathbf{u}\|_{L^2(K)}^\frac{1}{2}|c-\pi_hc|_{W^{1,4}(K)}
\\
&\le C h_K^{\min(\ell,\hat{\ell})} |K|^\frac{1}{4} \Big(\mathcal{D}_K(A,\mathbf{u},\gamma,h_K)\Big)^\frac{1}{2} |c|_{W^{1+\hat{\ell},4}(K)}.
\end{aligned}
\end{equation*}
Hence, we have the estimates of the stabilization term as follows: 
\begin{equation}\label{eq:nine error}
S_h(c-\pi_h c,v_h)\le C \left(\sum_{K\in\mathcal{T}_h} h_K^{2 \min(\ell,\hat{\ell})} |K|^\frac{1}{2} {\cal D}_K(A,{\bf u},\gamma,h_K)\|c\|_{W^{1+\hat{\ell},4}(K)}^2\right)^\frac{1}{2}\Big(S_h(v_h,v_h)\Big)^\frac{1}{2}.
\end{equation}
Summing up all the above estimates: \eqref{eq:error equation-0}, \eqref{eq:error estimates}, \eqref{eq:interpolation error estimates}, \eqref{eq:first term error}, \eqref{eq:second term error} with \eqref{eq:second term error-1} and \eqref{eq:second term error-2} or with \eqref{eq:second term error-11} and \eqref{eq:second term error-22},  \eqref{eq:third term error}, \eqref{eq:four-five error}, \eqref{eq:six-seven error}, \eqref{eq:eight error}, and \eqref{eq:nine error}, we conclude the error estimates in the following theorem.

\begin{theorem}\label{thm:error estimates-thm}
Let $c$ be the exact solution, having the regularity $W^{1+\hat{\ell},4}(K)$ on each $K$, with a real number $\hat{\ell}$ satisfying $\frac{1}{4}<\hat{\ell}\le \ell$, and $c_h,\pi_hc\in U_h$ be the DG solution and the DG interpolation, respectively. Then
\begin{equation*}
\begin{aligned}
& \|\pi_hc-c_h\|_{\mathscr{L}_{\text{\rm dfd},h}^S}\le C \left( \sum_{K\in\mathcal{T}_h} (A_{\max,K} h_K^{2\min(\ell,\hat{\ell})} \|c\|_{H^{1+\hat{\ell}}(K)}^2+A_{\max,K}^{-1} h_K^{2\min(\ell,\hat{\ell})}\|A\nabla c\|_{H^{\hat{\ell}}(K)}^2)\right)^\frac{1}{2} 
\\
&+C  \left(\sum_{\substack{\Lambda\in\mathcal{F}_h^{\text{\rm df-df-add},0}\\ \Lambda=\partial K_1\cap\partial K_2}} \sum_{i=1}^2 A_{\max,\Lambda,\partial K_i} h_{K_i}^{2\min(\ell,\hat{\ell})} (\|c\|_{H^{1+\hat{\ell}}(K_i)}^2+|K_i|^\frac{1}{2}\|c\|_{W^{1+\hat{\ell},4}(K_i)}^2)\right.
\\
&\qquad\qquad\left. +\sum_{\Lambda\in\mathcal{F}_{\text{\rm df-add},D,h}}A_{\max,\Lambda,\partial K} h_{K}^{2\min(\ell,\hat{\ell})} (\|c\|_{H^{1+\hat{\ell}}(K)}^2+|K|^\frac{1}{2}\|c\|_{W^{1+\hat{\ell},4}(K)}^2)\right)^\frac{1}{2}
\\
&  C \left(\sum_{\substack{\Lambda\in\mathcal{F}_h^{\text{\rm df-df},0}\\ \Lambda=\partial K_1\cap\partial K_2}} \sum_{i=1}^2 A_\Lambda h_{K_i}^{2\min(\ell,\hat{\ell})} \|c\|_{H^{1+\hat{\ell}}(K_i)}^2+\sum_{\substack{\Lambda\in\mathcal{F}_{\text{\rm df},D,h}\\ \Lambda=\partial K\cap \Gamma_{\text{\rm df},D}}} A_\Lambda h_{K}^{2\min(\ell,\hat{\ell})} \|c\|_{H^{1+\hat{\ell}}(K)}^2\right)^\frac{1}{2}
\\
&+C \left(\sum\limits_{K\in{\cal T}_h} h_K^{2\min(\ell,\hat{\ell})}|K|^\frac{1}{2} {\cal D}_K(A,{\bf u},\gamma,h_K) \|c\|_{W^{1+\hat{\ell},4}(K)}^2\right)^\frac{1}{2}.
\end{aligned}
\end{equation*}
\end{theorem}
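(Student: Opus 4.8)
The plan is to convert the energy norm into a computable quantity through Galerkin orthogonality, and then to recognize every factor depending on $v_h:=\pi_hc-c_h$ as a piece of that same energy norm. First I would start from the identity $\|v_h\|_{\mathscr{L}_{\text{\rm dfd},h}^S}^2=\mathscr{L}_{\text{\rm dfd},h}^S(v_h,v_h)$ of \eqref{eq:error equation-0}. Splitting $v_h=(\pi_hc-c)+(c-c_h)$ and using that $c_h$ solves \eqref{Eq:FEM-2 problem-1} together with the consistency $\mathscr{L}_{\text{\rm dfd},h}^S(c,v_h)=\mathscr{R}_h^T(v_h)$ from Theorem \ref{thm:consistency}, the contribution $\mathscr{L}_{\text{\rm dfd},h}^S(c-c_h,v_h)$ vanishes, so $\mathscr{L}_{\text{\rm dfd},h}^S(v_h,v_h)=\mathscr{L}_{\text{\rm dfd},h}^S(\pi_hc-c,v_h)$ as in \eqref{eq:error estimates}. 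I would then expand this right-hand side into its nine constituent terms exactly as in \eqref{eq:interpolation error estimates}.

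The second step is to collect the nine per-term bounds already established, namely \eqref{eq:first term error}, \eqref{eq:second term error} (via \eqref{eq:second term error-1}--\eqref{eq:second term error-2} when $\hat\ell>\tfrac12$ and via \eqref{eq:second term error-11}--\eqref{eq:second term error-22} when $\tfrac14<\hat\ell\le\tfrac12$), \eqref{eq:third term error}, \eqref{eq:four-five error}, \eqref{eq:six-seven error}, \eqref{eq:eight error}, and \eqref{eq:nine error}. Each of these has the form (an interpolation factor depending only on $c$ and the mesh) times (a factor depending on $v_h$), where the $v_h$-factor is the square root of a nonnegative quadratic quantity built from $v_h$.

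The decisive observation, which is the only real content of the combination, is that every $v_h$-factor appearing on the right of those nine estimates is the square root of one of the nonnegative summands constituting $\|v_h\|_{\mathscr{L}_{\text{\rm dfd},h}^S}^2$ in \eqref{eq:DG norm}: the diffusion-gradient factor $\big(\sum_K\|A^{\frac12}\nabla v_h\|_{L^2(K)}^2\big)^{1/2}$, the penalty sums over $\mathcal{F}_h^{\text{\rm df-df-dfd},0}\cup\mathcal{F}_{\text{\rm df-dfd},D,h}$, the advection-jump sum $\tfrac12\sum_{\Lambda\in\mathcal{F}_h^0}\langle|\mathbf{u}\cdot\mathbf{n}_\Lambda|,|\Lbrack v_h\Rbrack|^2\rangle_{L^2(\Lambda)}+\tfrac12\sum_{\Lambda\in\mathcal{F}_{\Gamma,h}}\langle|\mathbf{u}\cdot\mathbf{n}|,|v_h|^2\rangle_{L^2(\Lambda)}$, the reaction factor $\|\gamma_0^{\frac12}v_h\|_{L^2(\mathcal{T}_h)}$, and $S_h(v_h,v_h)^{1/2}$. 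Because every summand in \eqref{eq:DG norm} is nonnegative, each such factor is bounded by $\|v_h\|_{\mathscr{L}_{\text{\rm dfd},h}^S}$. I would then apply the discrete Cauchy--Schwarz inequality across the finitely many terms, factor out the common $\|v_h\|_{\mathscr{L}_{\text{\rm dfd},h}^S}$, and divide through. The four bracketed groups in the statement arise by merging the interpolation factors of like structure (respectively the volume diffusion and stabilization-diffusion contributions from \eqref{eq:first term error} and \eqref{eq:stabilization error-1}; the advection-dominated boundary contributions carrying the $|K|^{1/2}\|c\|_{W^{1+\hat\ell,4}}^2$ corrections; the $A_\Lambda$-weighted diffusion-boundary contributions from \eqref{eq:third term error} and \eqref{eq:four-five error}; and the $\mathcal{D}_K$-weighted volume contributions from \eqref{eq:six-seven error}, \eqref{eq:eight error}, \eqref{eq:nine error}), using that a factor over a subset of element boundaries is dominated by the corresponding factor over the full set.

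I expect the main obstacle to be the bookkeeping around the stabilization term. Since $S_h(v_h,v_h)$ appears simultaneously as a component of $\|v_h\|_{\mathscr{L}_{\text{\rm dfd},h}^S}^2$ in \eqref{eq:DG norm} and as the $v_h$-factor of \eqref{eq:nine error}, one must verify it is absorbed correctly, i.e.\ $S_h(v_h,v_h)\le\|v_h\|_{\mathscr{L}_{\text{\rm dfd},h}^S}^2$, which holds because the remaining summands in \eqref{eq:DG norm} are nonnegative. A second point demanding care is that the entire reduction rests on Theorem \ref{thm:consistency}, so it is valid only under the jump identifications \eqref{eq:H1 continuity}--\eqref{eq:normal H1 continuity-ad-ad}; and the two regularity regimes $\hat\ell>\tfrac12$ and $\tfrac14<\hat\ell\le\tfrac12$ must be kept separate, invoking the local trace inequality \eqref{eq:local trace inequality} or \eqref{eq:local trace ienquality wsp} respectively for the second term, since the latter produces precisely the $|K|^{1/2}\|c\|_{W^{1+\hat\ell,4}}^2$ corrections visible in the second bracket of the statement.
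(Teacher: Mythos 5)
Your proposal is correct and coincides with the paper's own argument: the paper likewise passes from $\|v_h\|_{\mathscr{L}_{\text{\rm dfd},h}^S}^2=\mathscr{L}_{\text{\rm dfd},h}^S(v_h,v_h)$ to $\mathscr{L}_{\text{\rm dfd},h}^S(\pi_hc-c,v_h)$ via consistency (Theorem \ref{thm:consistency}) and Galerkin orthogonality, expands into the nine terms of \eqref{eq:interpolation error estimates}, and sums the bounds \eqref{eq:first term error}--\eqref{eq:nine error}, each of whose $v_h$-factors is a nonnegative summand of \eqref{eq:DG norm}, before dividing by $\|v_h\|_{\mathscr{L}_{\text{\rm dfd},h}^S}$. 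Your handling of the two regularity regimes and of the absorption of $S_h(v_h,v_h)$ matches the paper's treatment.
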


By the triangle inequality, 
\[
\|c-c_h\|_{\mathscr{L}_{\text{\rm dfd},h}^S}\le \|c-\pi_hc\|_{\mathscr{L}_{\text{\rm dfd},h}^S}+\|\pi_hc-c_h\|_{\mathscr{L}_{\text{\rm dfd},h}^S},
\]
where $\|c-\pi_hc\|_{\mathscr{L}_{\text{\rm dfd},h}^S}$ can be estimated similarly as $\|\pi_hc-c_h\|_{\mathscr{L}_{\text{\rm dfd},h}^S}$ so that we obtain the following conclusion: 
\begin{corollary}
Under the sam conditions as in Theorem \ref{thm:error estimates-thm}, we have  
\begin{equation*}
\begin{aligned}
& \|c-c_h\|_{\mathscr{L}_{\text{\rm dfd},h}^S}\le
C \left( \sum_{K\in\mathcal{T}_h} (A_{\max,K} h_K^{2\min(\ell,\hat{\ell})} \|c\|_{H^{1+\hat{\ell}}(K)}^2+A_{\max,K}^{-1} h_K^{2\min(\ell,\hat{\ell})}\|A\nabla c\|_{H^{\hat{\ell}}(K)}^2)\right)^\frac{1}{2} 
\\
&+C  \left(\sum_{\substack{\Lambda\in\mathcal{F}_h^{\text{\rm df-df-add},0}\\ \Lambda=\partial K_1\cap\partial K_2}} \sum_{i=1}^2 A_{\max,\Lambda,\partial K_i} h_{K_i}^{2\min(\ell,\hat{\ell})} (\|c\|_{H^{1+\hat{\ell}}(K_i)}^2+|K_i|^\frac{1}{2}\|c\|_{W^{1+\hat{\ell},4}(K_i)}^2)\right.
\\
&\qquad\qquad\left. +\sum_{\Lambda\in\mathcal{F}_{\text{\rm df-add},D,h}}A_{\max,\Lambda,\partial K} h_{K}^{2\min(\ell,\hat{\ell})} (\|c\|_{H^{1+\hat{\ell}}(K)}^2+|K|^\frac{1}{2}\|c\|_{W^{1+\hat{\ell},4}(K)}^2)\right)^\frac{1}{2}
\\
&  C \left(\sum_{\substack{\Lambda\in\mathcal{F}_h^{\text{\rm df-df},0}\\ \Lambda=\partial K_1\cap\partial K_2}} \sum_{i=1}^2 A_\Lambda h_{K_i}^{2\min(\ell,\hat{\ell})} \|c\|_{H^{1+\hat{\ell}}(K_i)}^2+\sum_{\substack{\Lambda\in\mathcal{F}_{\text{\rm df},D,h}\\ \Lambda=\partial K\cap \Gamma_{\text{\rm df},D}}} A_\Lambda h_{K}^{2\min(\ell,\hat{\ell})} \|c\|_{H^{1+\hat{\ell}}(K)}^2\right)^\frac{1}{2}
\\
&+C \left(\sum\limits_{K\in{\cal T}_h} h_K^{2\min(\ell,\hat{\ell})}|K|^\frac{1}{2} {\cal D}_K(A,{\bf u},\gamma,h_K) \|c\|_{W^{1+\hat{\ell},4}(K)}^2\right)^\frac{1}{2}.
\end{aligned}
\end{equation*}
\end{corollary}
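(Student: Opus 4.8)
The plan is to start from the identity $\|v_h\|_{\mathscr{L}_{\text{\rm dfd},h}^S}^2=\mathscr{L}_{\text{\rm dfd},h}^S(v_h,v_h)$ with $v_h:=\pi_hc-c_h$, and to convert the right-hand side into a pure interpolation-error functional. First I would invoke the Galerkin orthogonality furnished by the consistency result of Theorem \ref{thm:consistency}: since $\mathscr{L}_{\text{\rm dfd},h}^S(c,v_h)=\mathscr{R}_h^T(v_h)=\mathscr{L}_{\text{\rm dfd},h}^S(c_h,v_h)$, the unknown $c_h$ can be eliminated and $\mathscr{L}_{\text{\rm dfd},h}^S(\pi_hc-c_h,v_h)$ replaced by $\mathscr{L}_{\text{\rm dfd},h}^S(\pi_hc-c,v_h)$, leaving only the interpolation remainder $\pi_hc-c$ to be bounded. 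Expanding this bilinear form along its definition \eqref{Eq:Bilinear form-1} produces the nine terms displayed in \eqref{eq:interpolation error estimates}, so the whole argument reduces to estimating each of them and summing.

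The key organizing principle is that each of the nine terms should be bounded by the product of a data-dependent interpolation factor and exactly one of the squared pieces comprising the energy norm \eqref{eq:DG norm}. Concretely, the volume diffusion term and the symmetric consistency term are controlled by $\|A^\frac{1}{2}\nabla v_h\|_{L^2(\mathcal{T}_h)}$ via the local trace and interpolation estimates \eqref{Eq:interpolation error-0}; the two penalty-consistency terms and the diffusion-dominated subterm of the second term are controlled by the penalty pieces on $\mathcal{F}_h^{\text{\rm df-df-dfd},0}$ and $\mathcal{F}_{\text{\rm df-dfd},D,h}$; the advection terms six and seven, recombined through the advection dual property \eqref{Eq:Convection for interpolaiton error estimates} of Lemma \ref{lem:convectionreconstructionproperty}, are controlled by the upwind jump piece $\tfrac12\sum_{\Lambda} \langle |\mathbf{u}\cdot\mathbf{n}_\Lambda|,|\Lbrack v_h\Rbrack|^2\rangle_{L^2(\Lambda)}$; the reaction term eight by $\|\gamma_0^\frac{1}{2}v_h\|_{L^2(\mathcal{T}_h)}$; and the stabilization term nine by $S_h(v_h,v_h)^\frac{1}{2}$ directly. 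In each case the data factor is assembled, through the definition \eqref{Eq:Stabilizing parameter-1} of $\mathcal{D}_K$ together with the pointwise formula \eqref{eq:stabilizing parameter pointwise} for $\tau_K$ and the $L^4$-type inequalities \eqref{eq:local trace ienquality wsp}, \eqref{eq:local trace L1}, \eqref{Eq:interpolation error-1}, into one of the four groups appearing in the statement. Having estimated all nine terms, I would apply Cauchy--Schwarz across the nine summands, factor out the total energy norm $\|v_h\|_{\mathscr{L}_{\text{\rm dfd},h}^S}$, and cancel one power of it against the left-hand side $\|v_h\|_{\mathscr{L}_{\text{\rm dfd},h}^S}^2$.

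The main obstacle, and the place where the multiple-partition machinery is indispensable, is the second term over the diffusion-diffusion and diffusion-Dirichlet boundaries. On the advection-dominated subset $\mathcal{F}_h^{\text{\rm df-df-add},0}\cup\mathcal{F}_{\text{\rm df-add},D,h}$ there is, by construction, \emph{no} penalty in the bilinear form, so the jump $\Lbrack v_h\Rbrack$ arising there cannot be absorbed into a penalty piece of the energy norm. The resolution is to exploit the defining inequality $A_\Lambda\le|\Lambda|^{\frac{1}{d-1}}|\mathbf{u}\cdot\mathbf{n}|$ of \eqref{eq:advection-dominated df-df-0}--\eqref{eq:advection-dominated df-df-1}, which trades the diffusion weight on those faces for the advection weight $|\mathbf{u}\cdot\mathbf{n}|$, so that $\Lbrack v_h\Rbrack$ is instead controlled by the upwind jump piece; this is exactly the split performed in \eqref{eq:second term error} and carried out in \eqref{eq:second term error-2} (and in \eqref{eq:second term error-22} for the low-regularity range $\frac14<\hat{\ell}\le\frac12$). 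A secondary technical difficulty is the low regularity $c\in W^{1+\hat{\ell},4}(K)$ with $\hat{\ell}$ possibly below $\frac12$: the ordinary $H^t$ trace inequality \eqref{eq:local trace inequality} is then unavailable for the normal flux, and one must switch to the $L^4$-based trace inequality \eqref{eq:local trace ienquality wsp} and the embedding $W^{1,4}\hookrightarrow L^\infty$ to handle the $L^1(\partial K)$ boundary norm of $\mathbf{u}\cdot\mathbf{n}$ in terms six and seven, which is precisely the contribution that the summand $h_K^2|K|^{-1}\|\mathbf{u}\cdot\mathbf{n}\|_{L^1(\partial K)}$ in $\mathcal{D}_K$ is engineered to absorb. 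Once these two points are settled, the remaining summation is routine.
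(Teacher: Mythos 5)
Your plan accurately reconstructs the paper's argument for bounding $\|\pi_hc-c_h\|_{\mathscr{L}_{\text{\rm dfd},h}^S}$ --- the coercivity identity \eqref{eq:error equation-0}, the Galerkin orthogonality supplied by Theorem \ref{thm:consistency}, the nine-term expansion \eqref{eq:interpolation error estimates}, the pairing of each term with exactly one piece of the energy norm \eqref{eq:DG norm}, and in particular the crucial handling of the advection-dominated faces by trading $A_\Lambda$ for $|\Lambda|^{1/(d-1)}|\mathbf{u}\cdot\mathbf{n}|$ so that the unpenalized jumps on $\mathcal{F}_h^{\text{\rm df-df-add},0}\cup\mathcal{F}_{\text{\rm df-add},D,h}$ are absorbed by the upwind jump seminorm, and the switch to the $L^4$-based trace and interpolation estimates in the low-regularity range. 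That, however, is the content of Theorem \ref{thm:error estimates-thm}, not of the corollary you were asked to prove. The corollary concerns $\|c-c_h\|_{\mathscr{L}_{\text{\rm dfd},h}^S}$, and your proposal terminates after cancelling one power of $\|v_h\|_{\mathscr{L}_{\text{\rm dfd},h}^S}$ with $v_h=\pi_hc-c_h$; it never returns from $\pi_hc-c_h$ to $c-c_h$.

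The missing step is the triangle inequality $\|c-c_h\|_{\mathscr{L}_{\text{\rm dfd},h}^S}\le\|c-\pi_hc\|_{\mathscr{L}_{\text{\rm dfd},h}^S}+\|\pi_hc-c_h\|_{\mathscr{L}_{\text{\rm dfd},h}^S}$ together with a bound for the first summand, which is exactly how the paper deduces the corollary from the theorem. That first summand is not free of content: the energy norm of $c-\pi_hc$ contains $\|A^{1/2}\nabla(c-\pi_hc)\|_{L^2(\mathcal{T}_h)}$, the penalty pieces on $\mathcal{F}_h^{\text{\rm df-df-dfd},0}\cup\mathcal{F}_{\text{\rm df-dfd},D,h}$, the upwind jump seminorm $\frac12\sum_{\Lambda}\langle|\mathbf{u}\cdot\mathbf{n}_\Lambda|,|\Lbrack c-\pi_hc\Rbrack|^2\rangle_{L^2(\Lambda)}$ over all of $\mathcal{F}_h^0\cup\mathcal{F}_{\Gamma,h}$, the reaction term $\|\gamma_0^{1/2}(c-\pi_hc)\|_{L^2(\mathcal{T}_h)}^2$ and $S_h(c-\pi_hc,c-\pi_hc)$, and each of these must be estimated --- by the same trace/interpolation tools and the same $\|\mathbf{u}\cdot\mathbf{n}\|_{L^1(\partial K)}$ device already in your plan --- to produce precisely the $A_\Lambda$-weighted and $\mathcal{D}_K$-weighted contributions on the right-hand side of the corollary. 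Since every tool required is already deployed in your nine-term analysis, this is an omission of an assembly step rather than of an idea; nevertheless, as written your argument establishes the intermediate estimate of Theorem \ref{thm:error estimates-thm} rather than the stated conclusion.
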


Consider the case 
\[
\hat{\ell}=\ell.
\]
An observation is that in order to obtain the optimal convergence order $\ell$ and the optimal SUPG-type convergence order $\ell+\frac{1}{2}$ we must bound the following two terms:
\[
\left(\sum_{\substack{\Lambda\in\mathcal{F}_h^{\text{\rm df-df},0}\cup \mathcal{F}_{\text{\rm df},D,h}\\ \Lambda\subset\partial K}} A_\Lambda h_K^{2\ell} |K|^\frac{1}{2} \|c\|_{W^{1+\ell,4}(K)}^2\right)^\frac{1}{2}
\]
and 
\[
\left(\sum\limits_{K\in{\cal T}_h} h_K^{2\ell}|K|^\frac{1}{2} {\cal D}_K(A,{\bf u},\gamma,h_K) \|c\|_{W^{1+\ell,4}(K)}^2\right)^\frac{1}{2},
\]
while other terms are in essence standard. The following corollary gives the optimal SUPG-type error bound. 

\begin{corollary}\label{cor:supg error bound}
Under the same conditions as in Theorem \ref{thm:error estimates-thm}, for the advection-dominated problem  with
\begin{equation*}
\max(\varepsilon_{\text{\rm df},K},A_{\max,K}^{-1}\sum_{k=1}^\ell \|A\|_{W^{k,\infty}(K)}^2)\le C h_K, 
\end{equation*}
the SUPG-type error bound $O(h^{\ell+\frac{1}{2}})$ holds. 
\end{corollary}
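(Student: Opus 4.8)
The plan is to substitute $\hat\ell=\ell$ into the error bound of the preceding corollary and to show that each of its four bracketed sums is $O(h^{2\ell+1})$, so that after taking the square root the global estimate becomes $O(h^{\ell+\frac12})$. Throughout I would treat the local quantities $\|c\|_{H^{1+\ell}(K)}$, $\|A\nabla c\|_{H^{\ell}(K)}$ and $\|c\|_{W^{1+\ell,4}(K)}$ as the fixed regularity data and track only the power of $h_K$ carried by the geometric and coefficient prefactors, letting the generic constant $C$ absorb the relevant global norms of $A$, $\mathbf{u}$, $\gamma_0$ and $c$; in particular the $O(\cdot)$ is a statement about the exponent of $h$.

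First I would dispose of the diffusion-type sums, namely the contributions over $\mathcal{F}_h^{\text{df-df-add},0}$, $\mathcal{F}_{\text{df-add},D,h}$, $\mathcal{F}_h^{\text{df-df},0}$ and $\mathcal{F}_{\text{df},D,h}$, together with the first sum. Each carries a prefactor of the form $A_{\max,K}\,h_K^{2\ell}$, $A_\Lambda\,h_K^{2\ell}$ or $A_{\max,\Lambda,\partial K_i}\,h_{K_i}^{2\ell}$. Since $A_{\max,K}\le\varepsilon_{\text{df},K}$ by definition and $A_\Lambda$, $A_{\max,\Lambda,\partial K_i}$ are controlled by the $\varepsilon_{\text{df}}$ of the adjacent elements, the advection-dominated hypothesis $\varepsilon_{\text{df},K}\le C h_K$ upgrades every such prefactor to $C h_K^{2\ell+1}$, supplying exactly the extra half power. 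The one nonroutine piece is $A_{\max,K}^{-1}h_K^{2\ell}\|A\nabla c\|_{H^{\ell}(K)}^2$: here I would expand $\|A\nabla c\|_{H^{\ell}(K)}^2$ by the Leibniz rule into a sum of $\|A\|_{W^{k,\infty}(K)}^2\|c\|_{H^{1+\ell}(K)}^2$ for $0\le k\le\ell$, bound the $k=0$ term by $A_{\max,K}^2\|c\|_{H^{1+\ell}(K)}^2$ (using that $A_{\max,K}$ controls the spectral norm of the symmetric positive semidefinite $A$), so that $A_{\max,K}^{-1}A_{\max,K}^2\le\varepsilon_{\text{df},K}\le C h_K$, while the $k\ge1$ terms are handled by the second half of the hypothesis, $A_{\max,K}^{-1}\sum_{k=1}^{\ell}\|A\|_{W^{k,\infty}(K)}^2\le C h_K$. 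In both cases the prefactor becomes $C h_K^{2\ell+1}$.

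Next I would treat the sum carrying $\mathcal{D}_K(A,\mathbf{u},\gamma,h_K)$, the heart of the SUPG rate, by splitting $\mathcal{D}_K$ into its five summands from \eqref{Eq:Stabilizing parameter-1}. The clean observation is that the factor $|K|^{\frac12}$ standing in front of $\mathcal{D}_K$ cancels the $|K|^{-\frac12}$ normalizations inside it: the summand $h_K|K|^{-\frac12}\|\mathbf{u}\|_{L^2(K)}$ yields the prefactor $h_K^{2\ell+1}\|\mathbf{u}\|_{L^2(K)}$, hence $O(h^{\ell+\frac12})$ automatically and with no appeal to the advection-dominated hypothesis, while $h_K^2|K|^{-\frac12}\|\iv\mathbf{u}\|_{L^2(K)}$ and $h_K^2|K|^{-\frac12}\|\gamma_0\|_{L^2(K)}$ give the higher power $h_K^{2\ell+2}$, hence $O(h^{\ell+1})$. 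For the boundary summand $h_K^2|K|^{-1}\|\mathbf{u}\cdot\mathbf{n}\|_{L^1(\partial K)}$ I would apply the local trace inequality \eqref{eq:local trace L1} together with the shape-regularity bound $|\partial K|h_K\le C|K|$ to obtain $\|\mathbf{u}\cdot\mathbf{n}\|_{L^1(\partial K)}\le C|K|^{\frac12}h_K^{-1}(\|\mathbf{u}\|_{L^2(K)}+h_K^{r}|\mathbf{u}|_{H^r(K)})$, which again produces the prefactor $h_K^{2\ell+1}$ up to a higher-order remainder. Finally the diffusion summand $\varepsilon_{\text{df},K}$ of $\mathcal{D}_K$ contributes $\varepsilon_{\text{df},K}h_K^{2\ell}\le C h_K^{2\ell+1}$ by the hypothesis.

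Collecting these, every bracketed sum in the corollary's bound is $O(h^{2\ell+1})$, so the claimed rate $O(h^{\ell+\frac12})$ follows after the square root. I expect the main obstacle to be precisely the term $A_{\max,K}^{-1}\|A\nabla c\|_{H^{\ell}(K)}^2$: it is the only place where the smoothness of the \emph{diffusion coefficient} enters, through $\sum_{k=1}^{\ell}\|A\|_{W^{k,\infty}(K)}^2$, and where the second half of the advection-dominated hypothesis is genuinely used; keeping the $k=0$ spectral bound separate from the $k\ge1$ derivative bounds, and verifying that the Leibniz expansion loses no power of $h_K$, is the delicate bookkeeping. The advection part of $\mathcal{D}_K$ is by contrast reassuring: the $|K|^{\pm\frac12}$ cancellation shows the streamline stabilization delivers the half-order gain intrinsically and independently of the regularity index $r$ of $\mathbf{u}$, consistent with the paper's claim that the convergence order is independent of $r$.
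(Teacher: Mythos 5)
Your proposal is correct and follows essentially the same route as the paper: bound the $\mathcal{D}_K$ sum via the $L^1$ trace inequality \eqref{eq:local trace L1} and shape regularity to extract the intrinsic $h_K^{2\ell+1}$ prefactor (independently of $r$), and use $\varepsilon_{\text{df},K}\le C h_K$ to upgrade every diffusion-weighted prefactor $A_\Lambda h_K^{2\ell}$, $A_{\max,K}h_K^{2\ell}$ to $C h_K^{2\ell+1}$. Your Leibniz-rule treatment of $A_{\max,K}^{-1}\|A\nabla c\|_{H^{\ell}(K)}^2$, splitting the $k=0$ spectral bound from the $k\ge 1$ derivative bounds, is exactly the detail the paper leaves implicit as ``in essence standard'' and is the correct reading of why the second half of the hypothesis is stated as it is.
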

\begin{proof}
From the above observation, we need only bound those two terms. Since $\mathbf{u}\in H(\iv;\Omega)\cap  \prod_{j=1}^J (H^r(D_j))^d$ with $1/2<r\le 1$,  
from the local trace inequality \eqref{eq:local trace L1} and the definition of ${\cal D}_K(A,{\bf u},\gamma,h_K)$ in \eqref{Eq:Stabilizing parameter-1}, we have 
\begin{equation*}
{\cal D}_K(A,{\bf u},\gamma,h_K)\le C h_K |K|^{-\frac{1}{2}} (\|\mathbf{u}\|_{L^2(K)}+h_K^r|\mathbf{u}|_{H^r(K)}+h_K\|\iv\mathbf{u}\|_{L^2(K)}+h_K \|\gamma\|_{L^2(K)}).
\end{equation*}
Then, we have obtained the optimal SUPG-type error bound $\mathcal{O}(h^{\ell+\frac{1}{2}})$, noticing that 
\begin{equation*}
\begin{aligned}
&\left(\sum\limits_{K\in{\cal T}_h} h_K^{2\ell}|K|^\frac{1}{2} {\cal D}_K(A,{\bf u},\gamma,h_K) \|c\|_{W^{1+\ell,4}(K)}^2\right)^\frac{1}{2}\le
\\
&  C \left(\sum\limits_{K\in{\cal T}_h} h_K^{2\ell+1} (\|\mathbf{u}\|_{L^2(K)}+h_K^r |\mathbf{u}|_{H^r(K)}+h_K\|\iv\mathbf{u}\|_{L^2(K)}+h_K \|\gamma\|_{L^2(K)}) \|c\|_{W^{1+\ell,4}(K)}^2\right)^\frac{1}{2}
\\
&\le C h^{\ell+\frac{1}{2}} 
\\
&\times \left(\sum\limits_{K\in{\cal T}_h} (\|\mathbf{u}\|_{L^2(K)}+h_K^r |\mathbf{u}|_{H^r(K)}+h_K\|\iv\mathbf{u}\|_{L^2(K)}+h_K \|\gamma\|_{L^2(K)}) \|c\|_{W^{1+\ell,4}(K)}^2\right)^\frac{1}{2} 
\\
&\le C h^{\ell+\frac{1}{2}}
\\
&\times \left(\sum\limits_{K\in{\cal T}_h} (\|\mathbf{u}\|_{L^2(K)}^2+h_K^{2r} |\mathbf{u}|_{H^r(K)}^2+h_K^2\|\iv\mathbf{u}\|_{L^2(K)}^2+h_K^2 \|\gamma\|_{L^2(K)}^2)\right)^\frac{1}{4} \|c\|_{W^{1+\ell,4}(\Omega)} 
\end{aligned}
\end{equation*}
and that $A_\Lambda h_K^{2\ell}\le C h^{2\ell+1}$ and  
\begin{equation*}
\left(\sum_{K\in\mathcal{T}_h} |K|^\frac{1}{2} \|c\|_{W^{1+\ell,4}(K)}^2\right)^\frac{1}{2}\le |\Omega|^\frac{1}{4}\|c\|_{W^{1+\ell,4}(\Omega)}. 
\end{equation*}
\end{proof}

\begin{remark}\label{rem:DK-1}
In particular, if $\mathbf{u}\in \mathbf{X}_{h}$ and $\gamma=0$, 
so that the following local inverse inequalities hold ($\mathbf{u}$ is still in a finite-dimensional space with respect to $\mathcal{T}_h$ and in any case,    
\begin{equation*}
h_K\|\iv\mathbf{u}\|_{L^2(K)}\le C \|\mathbf{u}\|_{L^2(K)},\quad h_K^r |\mathbf{u}|_{H^r(K)}\le C \|\mathbf{u}\|_{L^2(K)},
\end{equation*}
then  
\begin{equation*}
\begin{aligned}
&\left(\sum\limits_{K\in{\cal T}_h} h_K^{2\ell}|K|^\frac{1}{2} {\cal D}_K(A,{\bf u},\gamma,h_K) \|c\|_{W^{1+\ell,4}(K)}^2\right)^\frac{1}{2}\le C h^{\ell+\frac{1}{2}} \|\mathbf{u}\|_{L^2(\Omega)}^\frac{1}{2} \|c\|_{W^{1+\ell,4}(\Omega)}. 
\end{aligned}
\end{equation*}
\end{remark}

\begin{remark}\label{rem:DK-2}
A computationally more convenient stabilizing parameter, instead of the one \eqref{eq:stabilizing parameter pointwise}, can be defined as the following one: 
 $ 
\tau_K:=\dfrac{\alpha 
h_K^2
}
{{\cal D}_K(A,{\bf u},\gamma,h_K)}$, 
where ${\cal D}_K(A,{\bf u},\gamma,h_K)$ is given by \eqref{Eq:Stabilizing parameter-1}. All the analysis and results are the same applicable, with a slightly more requirement that $\hat{\ell}>\frac{d}{4}$ ($d=2$ or $3$). In particular, when $\hat{\ell}=\ell$, such requirement is satisfied. 
\end{remark}

\subsection*{Conclusion} 

We have designed and analyzed an interior DG method for solving the general steady-state linear partial differential equations (PDEs) of nonnegative characteristic form of mixed Dirichlet and Neumann boundary conditions. Such type of PDEs covers a large class of second-order elliptic-parabolic, first-order hyperbolic equations, with different physical and mathematical natures in the solution over the domain.  We have developed a crucial technique for the design of our DG method and for the purpose of the theoretical analysis:  the set of the interelement boundaries is multiply partitioned into a number of subsets. Such technique is novel. It is the first time such type of PDEs has been comprehensively and correctly studied in a DG method, with correct and minimal averages, jumps and penalties over the set of the diffusion-diffusion interelement boundaries (including the upwind jumps), correctly identifying the continuities and discontinuities of the solution with the help of the technique developed. The correctness of our DG method has been rigorously justified from the consistency result and has been illustrated by the example in \cite{HoustonSchwabSuli2002}. The correctness can be further verified with the two typical cases: (i) The pure diffusion problem, i.e., $\mathbf{u}\equiv 0$ and \eqref{eq:elliptic diffusion} holds; (ii) The pure hyperbolic problem, i.e., $A\equiv 0$. In these two cases, our DG method is the classical DG method, cf. \cite{HoustonSchwabSuli2002}.  In the general case  with \eqref{eq:vanish diffusion}, our DG method is new and novel, not known in the literature. 

We have in addition obtained the optimal error estimates and the optimal SUPG-type error estimates for our DG method for the linearized, steady-state incompressible, miscible displacement problem of nonnegative characteristic form, with the varying, dominating, low regularity advection velocity (Darcy velocity) in $\prod_{j=1}^J (H^r(D_j))^d$. We have introduced a key parameter which is only used for the purpose of the error estimates so that we have managed to obtain the SUPG-type error estimates for the low regularity solutions. It is the first time the SUPG-type error estimates have been obtained under the low regularity advection velocity, where the convergence order is independent of the regularity of the advection velocity. The independence property itself would be very interesting.

\end{document}